\documentclass{article}
\usepackage{fullpage}
%\usepackage{spconf}

% Load packages
\usepackage{amsmath, amssymb,bm}
\usepackage{hyperref}
\usepackage{enumerate,mathtools}
\usepackage{amsthm}
\usepackage{cite}
\usepackage{microtype}
\usepackage{color}
\usepackage{subcaption}

%\usepackage[keeplastbox]{flushend}

% Define environments

\newtheorem{theorem}{Theorem}
\newtheorem{cor}[theorem]{Corollary}
\newtheorem{lemma}[theorem]{Lemma}

\newtheorem{prop}[theorem]{Proposition}

\theoremstyle{definition}

\newtheorem{remark}{Remark}
\newtheorem{assumption}{Assumption}

\usepackage{pgfplots}
\usetikzlibrary{positioning}
\pgfplotsset{compat=1.12}

%% Define notations here:

% bold charactors

%\newcommand{\bf}{\bm{f}}

%\newcommand{\bm}{\bm{m}}

% calligraphic charactors

\newcommand{\cF}{\mathcal{F}}

% math commands

%\global\long\def\dd{\mathrm{d}}
\newcommand{\dd}{\mathrm{d}}

\newcommand{\ex}[1]{\ensuremath{\mathbb{E}\left[ #1\right]}}

\DeclareMathOperator{\mmse}{\sf mmse}

\DeclareMathOperator{\snr}{\sf snr}

\newcommand{\MSE}{\mathsf{MSE}}
\newcommand{\AMP}{\mathsf{AMP}}
\DeclareMathOperator{\MMSE}{\mathsf{MMSE}}

\newcommand{\reals}{\mathbb{R}}

\newcommand{\eps}{\epsilon}
\newcommand{\normal}{\mathcal{N}}

\usepackage{xspace}
\newcommand{\iid}{i.i.d.\xspace}
\newcommand{\iiddistr}{{\stackrel{\text{\iid}}{\sim}}}
\renewcommand{\hat}{\widehat}
\renewcommand{\tilde}{\widetilde}
\newcommand{\Bern}{{\rm Bern}}

\let\originalleft\left
\let\originalright\right
\renewcommand{\left}{\mathopen{}\mathclose\bgroup\originalleft}
\renewcommand{\right}{\aftergroup\egroup\originalright}

%\IEEEoverridecommandlockouts
\allowdisplaybreaks

\newif\iflongpaper

\longpapertrue
%\longpaperfalse

\title{All-or-Nothing Phenomena:\\ From Single-Letter to High Dimensions}

%
% \name{Galen Reeves\sthanks{G.~Reeves is with the Department of Electrical and Computer Engineering and the Department of Statistical Science, Duke University, Durham, NC 27708 USA; e-mail: {\tt galen.reeves@duke.edu}.}
% \qquad 
% Jiaming Xu\sthanks{J. Xu is with the Fuqua School of Business, Duke University, Durham, NC 27708 USA; e-mail: {\tt jiamingxu.868@duke.edu}.}
% \qquad
% Ilias Zadik \sthanks{I. Zadik is with the Operations Research Center, Massachusetts Institute of Technology, Cambridge, MA 02139 USA; e-mail: {\tt izadik@mit.edu}.}}
%
%\address{}

 \author{
 { Galen Reeves} \thanks{G.~Reeves is with the Department of Electrical and Computer Engineering and the Department of Statistical Science, Duke University, Durham, NC 27708 USA; e-mail: {\tt galen.reeves@duke.edu}.  G. Reeves is  is supported by NSF Grants CCF-1718494 and  CCF-1750362.}
 \and
 { Jiaming Xu}\thanks{J. Xu is with the Fuqua School of Business, Duke University, Durham, NC 27708 USA; e-mail: {\tt jiamingxu.868@duke.edu}.  J. Xu is  is supported by NSF Grants IIS-1932630, CCF-1850743, and CCF-1856424.} 
 \and
 { Ilias Zadik}
 \thanks{I. Zadik is with the Center for Data Science, New York University, New York, NY 10011 USA; e-mail: {\tt zadik@nyu.edu}. I. Zadik is supported by a CDS-Moore-Sloan postdoctoral fellowship. 
 %\thanks{ 
 \newline
 \indent This paper was presented at the 2019 IEEE International Workshop on Computational Advances in Multi-Sensor Adaptive Processing (CAMSAP), Guadeloupe, French West Indies. 
 }
}

%%% BEGIN DOCUMENT
\begin{document}

%\tableofcontents
%\clearpage

\maketitle

\maketitle

\begin{abstract} 
We consider the linear regression problem of estimating a $p$-dimensional vector $\beta$ from $n$ observations $Y = X \beta + W$, where $\beta_j \iiddistr \pi$ for a real-valued distribution $\pi$ with zero mean and unit variance,  $X_{ij} \iiddistr \normal(0,1)$, and $W_i\iiddistr \normal(0, \sigma^2)$.  In the asymptotic regime where  $n/p \to \delta$  and $ p/ \sigma^2 \to \snr$ for two fixed constants $\delta, \snr\in (0, \infty)$
as $p \to \infty$,  the limiting (normalized) minimum mean-squared error (MMSE) has been characterized by the MMSE of an associated single-letter (additive Gaussian scalar) channel.

In this paper, we show that if the MMSE function of the single-letter channel converges to a step function, then the limiting MMSE of estimating $\beta$ in the linear regression problem converges to a step function which jumps from $1$ to $0$ at a critical threshold.  Moreover, we establish that the limiting mean-squared error of the (MSE-optimal) approximate message passing algorithm also converges to a step function with a larger threshold, providing evidence for the presence of a computational-statistical gap between the two thresholds.
\end{abstract}

\section{Introduction} 
Consider the classical linear regression  model
\begin{align}\label{reg}
Y = X \beta + W
\end{align}
where  $X \in \reals^{n\times p}$ with $X_{ij}\iiddistr \normal(0,1)$, $\beta \in \reals^p$ with $\beta_j \iiddistr \pi$ for a distribution $\pi$ with zero mean and unit variance, and $W \in \reals^n$ with $W_{i}\iiddistr \normal(0,\sigma^2)$. 
We are interested in estimating $\beta$ from
observation of $(X,Y).$ For a given estimator 
$\hat \beta (X,Y)$, the normalized mean squared-error of estimating $\beta$ is given by
$$
\MSE\left(\hat\beta \right) :=
\frac{1}{p} \ex{\left\| \beta - \hat{\beta}\right \|^2}.
$$
Let $\MMSE$ denote the minimum of $\MSE\left(\hat\beta \right)$ among all possible estimators $\hat\beta$, or equivalently, 
\begin{align}
\MMSE := \frac{  1 }{  p} \ex{\left\| \beta - \ex{\beta \mid X,Y } \right \|^2}.
\end{align}
In this paper, we focus on the asymptotic regime: 
\begin{align}
\frac{n}{p} \to \delta \quad \text{ and } \quad 
\frac{p}{\sigma^2} \to \snr, \quad \text{ as } p \to \infty,\label{eq:limit_p}
\end{align}
for two fixed constants $\delta, \snr \in (0,\infty)$. 
Note that $\delta$ is the under-sampling ratio and $\snr$ is the signal-to-noise ratio in view of $\ex{ \| X  \beta\|^2}/ \ex{ \|W\|^2} = p / \sigma^2 $.

Recent work~\cite{GuoVerdu,reeves:2019c,barbier:2016} proves that under certain structural assumptions in terms of $(\pi,\delta,\snr)$, the limiting $\MMSE$ in
the asymptotic regime \eqref{eq:limit_p} is characterized by the \emph{replica-symmetric}  (RS) formula through a single letter channel
\begin{align}
y=\sqrt{s}\beta_0+N, \label{eq:single_letter}
\end{align}
where $s>0$, $\beta_0 \sim \pi$ and $N\sim \normal(0,1)$ are independent.  However, often the RS formula is  too complicated to extract structural behavior of the limiting MMSE. 

In this work, we propose generic conditions under which the limiting $\MMSE$ exhibits an all-or-nothing phenomena. More precisely, consider a family $(\pi_\eps, \delta_\eps,\snr_\eps)$ indexed by a positive parameter $\eps$ where $\pi_\eps$ has finite entropy $H_\eps:=H(\pi_\eps)$. We show that if the MMSE of the  single letter channel \eqref{eq:single_letter} as a function of $s$ converges to a step function as $\eps \to 0$, then the limiting $\MMSE$ of the linear regression model \eqref{reg} also converges to a step function, which jumps from $1$ to $0$
at a critical threshold $\delta_\eps=\delta_{\eps,\MMSE}$, where
\begin{align}
\delta_{\eps,\MMSE}:= \frac{ 2H_\eps}{  \log(1  + \snr_\eps)}. \label{MMSEthr0}
\end{align}
 In other words, an all-or-nothing phenomena in the single letter channel implies an all-or-nothing phenomena in the high-dimensional linear regression model. Moreover, we establish that the limiting $\MSE$ of the (MSE-optimal) approximate message passing (AMP) algorithm  also converges to a step function,
which jumps from $1$ to $0$ at a larger threshold $\delta_\eps=\delta_{\eps,\AMP}$, where
\begin{align}
\delta_{\eps,\AMP }:= \frac{ 2H_\eps(1+\snr_\eps)}{ \snr_\eps}. \label{Algthr0}
\end{align}

An important application of our general result is the binary linear regression model where $\beta_j \iiddistr \Bern(\eps)$. In this case, we show that the MMSE function of the single letter channel converges to a step function as  the sparsity $\eps \to 0$. Then we obtain from our general result that the limiting $\MMSE$ of the binary linear regression model
converges to a step function which jumps from $1$ to $0$ at the critical threshold $\delta_\eps= \frac{2\eps \log (1/\eps)}{\log (1+ \snr_\eps) } $. This coincides with the all-or-nothing phenomenon established in
\cite{reeves:2019b} for
the binary linear regression model where 
$\beta$ is chosen uniformly at random from the set of binary $k$-sparse vectors,
in the highly sparse and high signal-to-noise ratio regime  where $k/\sqrt{p}\to 0$ and  $k/\sigma^2$ is above a sufficiently large constant. Furthermore, we deduce from our general result that the limiting $\MSE$ of the (MSE-optimal) AMP converges to a step function which jumps from $1$ to $0$ at the critical threshold $\delta_\eps= \frac{2\eps \log (1/\eps) (1+\snr_\eps)}{\snr_\eps } $. 
This coincides with the computational threshold for a number of computationally efficient methods in the literature such as LASSO or Orthogonal Matching Pursuit. In particular, our result adds to the existing evidence for the presence of a computational-statistical gap between the two thresholds (see \cite{Zadik17a,Zadik17b} for an extended discussion and literature review on the presence of this computational-statistical gap).

%It is worth pointing our that replica-symmetric formulas for very small but fixed $\epsilon =2^{-100}$ has been recently computed (and plotted) in the case of the Bit Error Rate (BER) \cite{Zadik19} a non-smooth error metric which is more complex than MSE. We consider the extent to which the RS formulas computed in this context of BER or for other error metrics besides MSE exhibit also all-or-nothing phenomena as $\epsilon \to 0$ to be an intriguing future research direction. 

%\subsection{Notation}
\section{Preliminaries}

\subsection{The Replica Symmetric Formulas}

To describe the RS formulas, we first define the mutual information and MMSE functions for the single letter channel \eqref{eq:single_letter}:
\begin{align}
I(s) & := I(\beta_0; \sqrt{s} \beta_0  + N), \quad s > 0 \label{MIS} \\
M(s) & := \mmse(\beta_0 \mid \sqrt{s} \beta_0 + N),  \quad s > 0 \label{MSES}
\end{align} 
where $\beta_0 \sim \pi$ and $N \sim  \normal(0, 1)$ are independent. Both of these functions are non-negative and the unit variance assumption on $\pi$ means that for any $s > 0,$ (see \cite{GuoVerduShamai} for details)
\begin{align}
I(s)  & \le  \frac{1}{2} \log(1 + s) \le \frac{s}{2}, \\
M(s) & \le \frac{1}{ 1 + s}  \le 1.
\end{align}
Moreover, the I-MMSE relation for the single-letter channel~\cite{GuoVerduShamai} states the derivative of the mutual information is one half the MMSE, that is $I'(s) = \frac{1}{2} M(s)$.  

Next, we define the potential function $\cF : [0,\infty) \to [0,\infty)$ according to
\begin{align}
\cF(s) & := I (s )  + \frac{\delta}{2} \phi\left( \frac{s   }{\delta \snr }   \right), \label{pote}
\end{align}
where $ \phi(x)  = x - \log x - 1$, and $\delta, \snr$ are respectively the undersampling ratio and the signal-to-noise ratio of our original model. Note that $\phi(x)$ is convex and non-negative on $(0, \infty)$.

\begin{lemma}\label{lem:cFmin} 
All stationary points of  $\cF(s)$ lie on the open interval between $\delta \snr / (1 + \snr)$ and $\delta \snr$. 
\end{lemma}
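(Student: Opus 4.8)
The plan is to differentiate $\cF$ and reduce the claim to two one-sided derivative estimates. Using the I-MMSE identity $I'(s)=\frac12 M(s)$ together with $\phi'(x)=1-1/x$, I get, for every $s\in(0,\infty)$,
\[
\cF'(s)=\frac12 M(s)+\frac{1}{2\snr}\left(1-\frac{\delta\snr}{s}\right)=\frac12\left(M(s)+\frac{1}{\snr}-\frac{\delta}{s}\right),
\]
so $s$ is a stationary point if and only if $M(s)=\frac{\delta}{s}-\frac1\snr$. Since $\cF$ is differentiable on all of $(0,\infty)$, it therefore suffices to prove that $\cF'(s)>0$ for all $s\ge \delta\snr$ and $\cF'(s)<0$ for all $s\in\big(0,\delta\snr/(1+\snr)\big]$; these two facts rule out stationary points outside the open interval in question.

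The upper cutoff is immediate: if $s\ge\delta\snr$ then $\frac1\snr-\frac\delta s\ge 0$, and because $\pi$ has unit variance (hence is non-degenerate) we have $M(s)>0$ at every finite $s>0$; thus $\cF'(s)>0$. For the lower cutoff I would use the bound $M(s)\le \frac{1}{1+s}$ to get $2\cF'(s)\le \frac{1}{1+s}+\frac1\snr-\frac\delta s$, so it is enough to show that $\frac\delta s-\frac1\snr>\frac1{1+s}$ on $\big(0,\delta\snr/(1+\snr)\big]$. On this range one has $s<\delta\snr$, so both sides are positive and clearing denominators converts the inequality into $q(s):=s^2+(1+\snr-\delta\snr)s-\delta\snr<0$. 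The function $q$ is convex with $q(0)=-\delta\snr<0$ and, after simplifying using $\delta\snr=(1+\snr)\cdot\delta\snr/(1+\snr)$, $q\big(\delta\snr/(1+\snr)\big)=-\delta^2\snr^3/(1+\snr)^2<0$; by convexity $q$ is then negative on the whole segment $[0,\delta\snr/(1+\snr)]$, which gives $\cF'(s)<0$ there.

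Putting the two halves together shows that every zero of $\cF'$ lies strictly between $\delta\snr/(1+\snr)$ and $\delta\snr$, which is the assertion. I expect no serious obstacle here; the two points that need a little care are (i) recording that $M(s)$ is strictly positive for finite $s$ — this is exactly what excludes the endpoint $s=\delta\snr$ — and (ii) the endpoint evaluation of the quadratic $q$, where the algebra collapses nicely once one uses the defining relation of $\delta\snr/(1+\snr)$. Everything else is a routine computation.
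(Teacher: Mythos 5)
Your proof is correct and follows essentially the same route as the paper's: differentiate $\cF$ via the I-MMSE relation and pin down the sign of $\cF'$ on the two outer ranges, using $M(s)>0$ to force $\cF'(s)>0$ for $s\ge\delta\snr$. The only divergence is at the lower cutoff, where you invoke the sharper bound $M(s)\le 1/(1+s)$ and analyze the quadratic $q$, whereas the paper simply uses $M(s)<1$, under which $s\le \delta\snr/(1+\snr)$ gives $\delta/s\ge 1+1/\snr$ and hence $\cF'(s)<0$ immediately — your version is valid (and the endpoint evaluation $q(\delta\snr/(1+\snr))=-\delta^2\snr^3/(1+\snr)^2$ checks out) but does more work than necessary.
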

\begin{proof} 
By differentiation with respect to $s$ and the I-MMSE relation for the single-letter channel~\cite{GuoVerduShamai}, % \nbr{JX. Add $I'(s)=(1/2) M(s)$ for completeness?},
 we have that for any $s>0$ $$\cF'(s)  \propto M(s)   +  1/ \snr-  \delta / s .$$  The fact that $M(s) > 0$ for all $s$  implies that $\cF'(s)$ is strictly positive  for all $s \ge \delta \snr$, and thus $\cF(s)$ is strictly increasing on $[\delta \snr, \infty)$. Alternatively, the fact that $M(s) < 1$ for all $s > 0$ implies that $\cF'(s)$ is strictly negative for all $s \le \delta \snr / (1 + \snr)$, and thus $\cF(s)$ is strictly decreasing on $(0, \delta \snr  / (1 + \snr)]$. 
\end{proof}

In view of Lemma~\ref{lem:cFmin}, the minimum of the potential function and the smallest and largest minimizers can be defined as follows: 
\begin{align}
\cF^* &:=  \min_{s} \cF(s), \\
\underline{s}^*& := \min\{ s \, : \, \cF(s) = \cF^*\},\\
\overline{s}^* &:= \max\{ s \, : \, \cF(s) = \cF^*\}.
\end{align}
Note that $\underline{s}^* = \overline{s}^*$ if and only if the minimum is attained at a unique point. 

\begin{prop}[RS MMSE\cite{reeves:2019c, barbier:2016,BarbierPNAS}]\label{RS_MMSE}
For any $(\delta,\snr,\pi)$ for which $(\snr,\pi)$ satisfies the single-crossing property \cite{reeves:2019c} and $\pi$ has finite fourth moment \footnote{A different set of assumptions on  $(\delta,\snr,\pi)$ for which the Proposition \ref{RS_MMSE} holds can be found in \cite{barbier:2016,BarbierPNAS}}, the mutual information and MMSE satisfy 
\begin{align}
\lim_{p \to \infty} \frac{  1 }{  p} I(\beta; X, Y) & = \cF^*, \\
\limsup_{p \to \infty} \frac{  1 }{  p} \ex{\left\| \beta - \ex{\beta \mid y , X } \right \|^2}&\le M(\overline{s}^* ),  \\
\liminf_{p \to \infty} \frac{  1 }{  p} \ex{\left\| \beta - \ex{\beta \mid y , X } \right \|^2}&\ge M(\underline{s}^* )  ,
\end{align}
where the limits are taken as $(n=n_p,p,\sigma^2=\sigma^2_p)$ scale to infinity with $p \to +\infty,$ $n/p \to \delta$ and $p/\sigma^2 \to \snr$. 
\end{prop}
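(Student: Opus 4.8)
The plan is to prove the proposition in two stages: first establish the replica-symmetric formula for the normalized mutual information, $\frac1p I(\beta;X,Y)\to\cF^*$, and then deduce the two MMSE bounds from it via the I-MMSE relation. Stage one is the substantive part; stage two is a soft argument. Note that once the mutual information limit is in hand, stage two automatically yields the exact limit $M(\underline s^*)=M(\overline s^*)$ of the MMSE whenever $\cF$ has a unique minimizer, and otherwise the squeeze between $M(\underline s^*)$ and $M(\overline s^*)$.

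For stage one I would follow one of the two established routes, whose hypotheses are exactly the single-crossing property and the finite fourth moment. The ``proof by calculus'' of~\cite{reeves:2019c} goes as follows: the map $\snr\mapsto\frac1p I(\beta;X,Y)$ is concave and uniformly Lipschitz, so a subsequential limit $\mathcal I(\snr)$ exists and is concave; by the I-MMSE relation applied to the signal $X\beta$ together with concentration of the overlap $\frac1p\langle\beta,\mathbb E[\beta\mid X,Y]\rangle$ (this is where the fourth moment of $\pi$ is used), the derivative of $\mathcal I$ at almost every $\snr$ is an explicit functional of a fixed point $s$ of the replica equation $s=\delta\snr/(1+\snr M(s))$; the single-crossing property guarantees that the relevant fixed point is unique for a.e.\ $\snr$ and coincides with the minimizer of $\cF$; integrating (using the trivial boundary value $\mathcal I(\snr)\to 0$ as $\snr\to 0$) recovers $\mathcal I=\cF^*$ along the full sequence. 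An alternative is the interpolation route: a Guerra--Toninelli interpolation between \eqref{reg} and the fully decoupled scalar model in which each $\beta_j$ is observed only through \eqref{eq:single_letter} at an effective SNR $s$ supplies one of the two inequalities (with the Nishimori identities fixing the sign of the remainder), while the adaptive interpolation of~\cite{barbier:2016}, or the Aizenman--Sims--Starr cavity scheme, supplies the matching one; in this route the single-crossing property is what forces the stationary point produced by the cavity bound --- which by Lemma~\ref{lem:cFmin} lies in $(\delta\snr/(1+\snr),\delta\snr)$ --- to be the global minimizer $\cF^*$.

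For stage two, augment the model with a side channel $\tilde Y=\sqrt t\,\beta+\tilde N$, $\tilde N_j\iiddistr\normal(0,1)$ independent of $(X,W,\beta)$, and set $h_p(t):=\frac1p I(\beta;X,Y,\tilde Y)$. Since the two observations of $\beta$ combine into an effective scalar SNR $s+t$, stage one applies to this model (which inherits the single-crossing property) and gives $h_p(t)\to\cF^*_t:=\min_s\big[\,I(s+t)+\tfrac\delta2\phi(\tfrac{s}{\delta\snr})\,\big]$ for every $t\ge 0$, with $\cF^*_0=\cF^*$. By the I-MMSE relation, $h_p'(t)=\tfrac1{2p}\,\mathbb E\big\|\beta-\mathbb E[\beta\mid X,Y,\tilde Y]\big\|^2$, which is non-increasing in $t$, so each $h_p$ is concave and $h_p'(0^+)$ equals half the quantity to be bounded. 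An envelope-theorem computation on $\cF$ shows that $t\mapsto\cF^*_t$ has one-sided derivatives $\tfrac12 M(\overline s^*)$ from the right and $\tfrac12 M(\underline s^*)$ from the left at $t=0$ (the largest minimizer giving the smaller value, since $M$ is decreasing), and convexity transfers these to the finite-$p$ derivatives: $\limsup_p h_p'(0^+)\le\tfrac12 M(\overline s^*)$ and $\liminf_p h_p'(0^+)\ge\tfrac12 M(\underline s^*)$. The one delicate point is that this side channel reaches only $t\ge 0$, so getting a genuine two-sided comparison at $t=0$ requires either perturbing the SNR of the main channel instead --- together with the standard lemma relating $\mmse(X\beta\mid X,Y)$ to $\frac1p\mathbb E\|\beta-\mathbb E[\beta\mid X,Y]\|^2$ --- or extracting both MMSE bounds directly from the interpolation of stage one; either way this is routine and is carried out in~\cite{reeves:2019c,barbier:2016}.

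The main obstacle is the lower bound $\liminf_p\frac1p I(\beta;X,Y)\ge\cF^*$ in stage one; everything else is either a standard interpolation/calculus computation or the soft packaging of stage two. Within that step, the single-crossing hypothesis does the essential work of upgrading the best achievable cavity/interpolation bound --- which a priori pins the limit only to \emph{some} stationary point of $\cF$ --- to the statement that this stationary point is the global minimizer, and it cannot be dropped from the present method. Finite fourth moment of $\pi$ enters only through the overlap-concentration estimates needed to make the interpolation remainders and cavity comparison rigorous, and is expected to be technical rather than essential.
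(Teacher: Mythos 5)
The paper offers no proof of Proposition~\ref{RS_MMSE}: it is imported wholesale from \cite{reeves:2019c,barbier:2016,BarbierPNAS} (see the footnote attached to the statement), so there is no in-paper argument to compare yours against. Judged as a reconstruction of the cited proofs, your outline is accurate and well organized: stage one correctly identifies the two available routes (the single-crossing ``proof by calculus'' of \cite{reeves:2019c} and the interpolation/cavity route of \cite{barbier:2016,BarbierPNAS}), correctly locates where the single-crossing property does its work (upgrading the stationary point produced by the cavity bound to the global minimizer of $\cF$) and where the fourth moment enters (overlap concentration), and your fixed-point equation $s=\delta\snr/(1+\snr M(s))$ matches $\cF'(s)=0$ for the potential \eqref{pote}. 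Stage two is the standard I-MMSE/envelope extraction of the MMSE bounds from the mutual information limit, and you rightly flag that the nonnegative side channel only gives a one-sided comparison at $t=0$.

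One orientation issue in stage two deserves attention. Your own envelope computation gives right-derivative $\tfrac12 M(\overline{s}^*)$ and left-derivative $\tfrac12 M(\underline{s}^*)$ for $t\mapsto\cF^*_t$ at $t=0$; since $M$ is non-increasing and $\underline{s}^*\le\overline{s}^*$, the value $M(\overline{s}^*)$ is the \emph{smaller} of the two, and the concavity sandwich $h'(0^+)\le\liminf_p h_p'\le\limsup_p h_p'\le h'(0^-)$ therefore yields
\begin{align}
M(\overline{s}^*)\;\le\;\liminf_{p}\frac{1}{p}\ex{\left\|\beta-\ex{\beta\mid X,Y}\right\|^2}\;\le\;\limsup_{p}\frac{1}{p}\ex{\left\|\beta-\ex{\beta\mid X,Y}\right\|^2}\;\le\;M(\underline{s}^*),
\end{align}
i.e.\ the opposite pairing from the one in your final display. (The proposition as printed in the paper has the same transposition; taken literally it would force $M(\underline{s}^*)=M(\overline{s}^*)$, which defeats the purpose of distinguishing the two minimizers, so this looks like a typo in the source rather than a substantive discrepancy.) The downstream Corollary~\ref{cor:MMSE} only uses one side of the sandwich in each regime of $r$ and goes through under either orientation, but as written your stage-two conclusion does not follow from the concavity argument you describe; you should state the sandwich in the orientation your own computation produces.
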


Next, we turn to the family of \emph{approximate message passing} (AMP) \cite{donoho:2009a,bayati:2011} algorithms and specifically to the case of MMSE-AMP which is proven in to be optimal among AMP algorithms in minimizing the MSE of the recovery problem of interest \cite{Reeves:2012}. For simplicity from now on when we say AMP we refer to the MMSE-AMP algorithm. It turns out that a related formula to the one given in Proposition \ref{RS_MMSE} describes the asymptotic MSE associated with AMP.

The smallest stationary point is defined as
\begin{align}
s^\AMP  & := \inf\{ s  \, : \, \cF'(s) = 0 \}.
\end{align}
It is rather straightforward to check that $s^\AMP$ is attained by some positive value $s$ and therefore it is a stationary point of $\cF(s)$. In particular, by Lemma~\ref{lem:cFmin} we have $s^\AMP   \in  ( \delta \snr /(1 + \snr), \delta \snr)$. 

For the next result, for $T \in \mathbb{N}$ let $\hat{\beta}_{\AMP,T}(Y,X)$ be  the output of the AMP estimator \cite[Section II.C]{Reeves:2012} with input data $(Y,X)$ after $T$ iterations. 
\begin{prop}[AMP, \cite{bayati:2011, Reeves:2012}]
\label{propAMP}
For any $(\delta,\snr,\pi)$ where $\pi$ has a finite fourth moment,  AMP satisfies
\begin{align}
\lim_{T \rightarrow + \infty} \lim_{p \rightarrow + \infty} \frac{  1 }{  p} \ex{\left\| \beta - \hat{\beta}_{\AMP,T}(Y,X)  \right \|^2}& =  M(s^\AMP )  
\end{align}
where the inner limit is taken as $(n=n_p,p,\sigma^2=\sigma^2_p)$ scale to infinity with $p \to +\infty,$ $n/p \to \delta$ and $p/\sigma^2 \to \snr$. 
\end{prop}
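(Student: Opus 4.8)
The plan is to split Proposition~\ref{propAMP} into two essentially independent pieces: a rigorous state-evolution statement, which pins down the inner limit $p\to\infty$ at a fixed iteration count $T$, and an elementary study of a scalar fixed-point recursion, which handles the outer limit $T\to\infty$.

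For the first piece I would invoke the Bayati--Montanari analysis of AMP for Gaussian sensing matrices~\cite{bayati:2011}, together with the optimality of the MMSE-AMP denoiser among admissible AMP denoisers~\cite{Reeves:2012}. The posterior-mean denoiser is Lipschitz as soon as $\pi$ has finite second moment, so it meets the regularity hypotheses of that framework, and state evolution then gives
\begin{align*}
\lim_{p\to\infty}\frac1p\,\ex{\| \beta - \hat\beta_{\AMP,T}(Y,X) \|^2} \;=\; M(s_T),
\end{align*}
where $(s_t)_{t\ge0}$ is the state-evolution sequence and $s_t$ is the effective signal-to-noise ratio of the equivalent scalar channel at iteration $t$. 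After accounting for the normalization of $X$ (so that the effective per-observation noise variance is $1/(\delta \snr)$ in the limit), the standard initialization $\hat\beta_{\AMP,0}=0$ makes this recursion
\begin{align*}
s_0 = 0, \qquad s_{t+1} = G(s_t), \qquad G(s) := \frac{\delta}{M(s) + 1/\snr},
\end{align*}
and differentiating~\eqref{pote} with $I'(s)=\tfrac12 M(s)$ shows $\cF'(s)\propto M(s)+1/\snr-\delta/s$, so the fixed points of $G$ on $(0,\infty)$ are exactly the stationary points of $\cF$ (the precise index at which the MSE equals $M(s_T)$ is immaterial for the limit in $T$).

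For the second piece I would first record two structural facts about $G$. The function $M$ is continuous on $[0,\infty)$ with $M(0)=\var(\pi)=1$ (and smooth on $(0,\infty)$), so $G$ is continuous with $0<G(0)=\delta/(1+1/\snr)<\infty$; and $M$ is non-increasing on $(0,\infty)$ --- either because $M=2I'$ and $I$ is concave, or because a higher-SNR Gaussian observation Blackwell-dominates a lower-SNR one --- so $s\mapsto M(s)+1/\snr$ is strictly positive and non-increasing and hence $G$ is non-decreasing with $G(s)<\delta \snr$ for all $s$. The outer limit is then a monotone-iteration argument: since $G(0)\ge 0=s_0$ and $G$ is non-decreasing, $(s_t)$ is non-decreasing by induction; it is bounded above (by Lemma~\ref{lem:cFmin}, or directly since $G(s)<\delta \snr$), so it converges to some $s_\infty\ge s_1>0$, which by continuity of $G$ satisfies $s_\infty=G(s_\infty)$ and is therefore a stationary point of $\cF$, giving $s_\infty\ge s^\AMP$. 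Conversely $s^\AMP$ is itself a fixed point of $G$, and since $s_0=0<s^\AMP$ and $G$ is non-decreasing, induction gives $s_t\le s^\AMP$ for all $t$, so $s_\infty\le s^\AMP$ (consistently, $\cF'\to-\infty$ as $s\downarrow0$ and $\cF'\ne0$ on $(0,s^\AMP)$, so $\cF'<0$ and $G(s)>s$ there, i.e.\ the iteration indeed climbs towards $s^\AMP$). Hence $s_\infty=s^\AMP$, and continuity of $M$ yields $\lim_{T\to\infty}M(s_T)=M(s^\AMP)$, which with the state-evolution identity is the claim.

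The hard part, to the extent there is one, is the first piece: the rigorous identification of the finite-dimensional AMP error with the scalar state-evolution prediction. But because the limits are taken in the order $p\to\infty$ and then $T\to\infty$, this is precisely the setting of~\cite{bayati:2011,Reeves:2012}, so no uniformity in $T$ is required and the step is essentially a citation; the only thing to check is that the MMSE-AMP denoiser satisfies the regularity conditions of that framework, which holds under the finite-fourth-moment assumption on $\pi$. Everything else --- monotonicity of $G$, placing the initialization strictly below $s^\AMP$, and ruling out overshoot of the smallest stationary point --- is elementary once the right scalar recursion has been identified; the one spot needing mild care is the singularity of $\delta/s$ at $s=0$ and the continuous extension of $M$ to $s=0$.
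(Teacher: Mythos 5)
Your proposal is correct and matches what the paper does: the paper offers no independent proof of Proposition~\ref{propAMP}, deferring entirely to \cite[Theorem~6]{Reeves:2012}, and your two-step argument (Bayati--Montanari state evolution for the inner limit in $p$, then monotone convergence of the scalar recursion $s_{t+1}=\delta/(M(s_t)+1/\snr)$ from $s_0=0$ up to the smallest fixed point, which by the I-MMSE computation is exactly $s^{\AMP}$) is precisely the standard argument underlying that cited result. The only point deserving a remark is that the Lipschitz/regularity hypotheses on the posterior-mean denoiser are what the finite-fourth-moment assumption is for, which you correctly flag as the one step that is a genuine citation rather than elementary.
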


\begin{remark}
The results stated above imply that AMP is optimal whenever $\underline{s}^* = \overline{s}^* = s^{\AMP} $. 
\end{remark}

\begin{remark}
For a proof of Proposition \ref{propAMP} we refer the reader to the statement and proof of \cite[Theorem 6]{Reeves:2012}.
\end{remark}
\section{Main Results}

Let us consider now a family of coefficient distributions $(\pi_\eps)_{\eps>0}$ indexed by a positive-valued parameter $\eps>0$. We assume throughout the section that for each $\epsilon>0$ the distributions $\pi_\eps$ has zero mean, unit variance  and finite entropy $H_\eps$. 
Our results are all based on the following assumption on the family $\pi_\eps.$
\begin{assumption}\label{dfn:stepass}
Let $(\pi_\eps)_{\eps>0}$ be a family of distributions with unit variance and finite entropy $H_\eps$. The MMSE function $M_\eps(s)$ of the single letter channel, as defined in (\ref{MSES}), for $\pi_\eps$ coefficient distribution is assumed to converge pointwise to a step function as $\eps \to 0$ in the following sense
\begin{align}
\lim_{\eps \to 0} M_\eps( 2 H_\eps\,  t) &= \begin{cases}
1 , & t \in [0,1) \\
0, & t \in (1, \infty).
\end{cases}\label{eq:Meps_cond}
\end{align}
\end{assumption} 

\begin{remark}
It can be straightforwardly checked using the I-MMSE relation for the single-letter channel \cite{GuoVerduShamai} that the rescaling in the argument of $M_\eps$ by twice the entropy term, i.e.\ by $2 H_\eps  $, is necessary for the convergence of $M_{\eps}$ to the step function. To see why, observe that $M_{\eps}(s)$, using the I-MMSE relation, satisfies the integral constraint $  \int_0^{ \infty} M_\eps ( 2 H_\eps  t ) \, d t  = 1$. Thus, convergence to a step function at a threshold other than $2 H_\eps$ would violate this constraint.
\end{remark}

\begin{remark}
As we establish later in the section, Assumption~\ref{dfn:stepass} is satisfied for the family of (normalized) Bernoulli distributions with probability $\epsilon.$ 
See Fig.~\ref{fig:single_letter_bernoullie} for a graphical illustration. We expect the assumption to hold under greater generality. %\nb{GR: I can add some comments here. A necessary condition is that that the fourth moment must blow up at $\eps \to 0$. This happens with properly normalized Bernoulli. I'll check if it also applies to other distributions.}
\end{remark}
We now present our two main results assuming the family of distributions $( \pi_\eps)_{\eps>0}$ satisfies Assumption \ref{dfn:stepass}. 

\begin{figure}
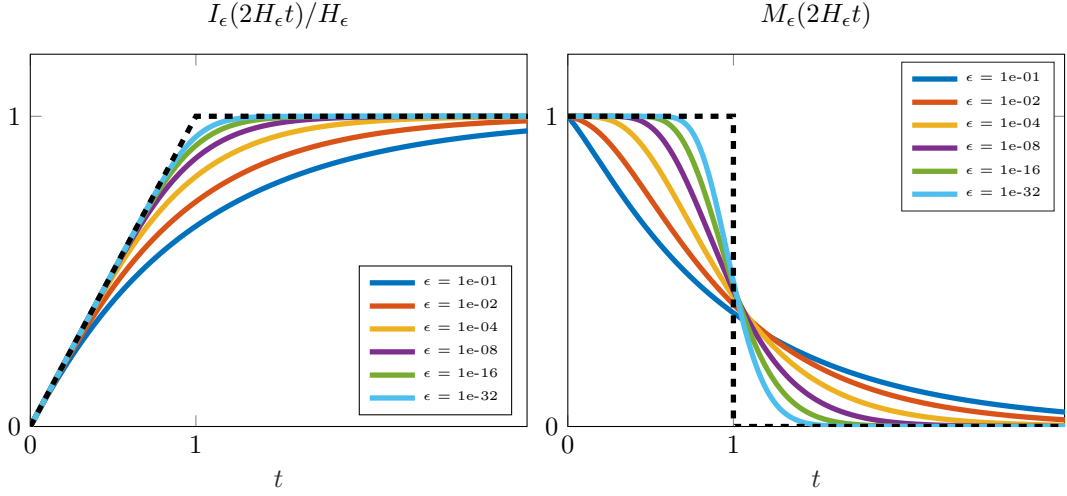

\centering
\input{Is.tex} \input{Ms.tex} 
\caption{Single letter mutual information and MMSE functions corresponding to the Bernoulli$(\eps)$ distribution normalized to unit variance.}
\label{fig:single_letter_bernoullie}
\end{figure}

\begin{theorem}\label{thm:MMSE_Thm}
Let $( \pi_\eps)_{\eps>0}$ satisfy Assumption \ref{dfn:stepass}. Given a number $r \in (0,1) \cup (1, \infty)$, let $ (\delta_\eps, \snr_\eps, \pi_\eps)_{\eps>0}$ be a family of triplets such that
\begin{align}
\lim_{\eps \to 0} \frac{\delta_\eps  }{ \delta_{\eps,\MMSE}  } &= r. \label{eq:scaling_r}
\end{align}
Then, the minimizers of the RS potential function exhibit the all-or-nothing behavior in the small-$\eps$ limit depending on whether $r$ is greater than or less than one: 
\begin{align}
r \in (0, 1) \quad \implies \quad  & \lim_{\eps \to 0} M_\eps \left( \overline{s}^*_\eps \right )  = 1\\
r \in (1, \infty) \quad \implies \quad  & \lim_{\eps \to 0}  M_\eps \left(  \underline{s}^*_\eps\right )  =0.
\end{align}
\end{theorem}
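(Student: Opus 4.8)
The plan is to work directly with the potential function $\cF_\eps(s) = I_\eps(s) + \tfrac{\delta_\eps}{2}\phi(s/(\delta_\eps\snr_\eps))$ and track where its minimizers lie after rescaling $s = 2H_\eps t$. By Lemma~\ref{lem:cFmin}, all stationary points --- in particular $\underline s^*_\eps$ and $\overline s^*_\eps$ --- lie in the interval $(\delta_\eps\snr_\eps/(1+\snr_\eps),\, \delta_\eps\snr_\eps)$, so after dividing by $2H_\eps$ the rescaled minimizers $\underline t^*_\eps := \underline s^*_\eps/(2H_\eps)$ and $\overline t^*_\eps := \overline s^*_\eps/(2H_\eps)$ lie in a bounded-ish window determined by $\delta_\eps\snr_\eps/(2H_\eps)$. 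The key idea is to \emph{compare} the value of $\cF_\eps$ at a candidate small minimizer (near $s \approx \delta_\eps\snr_\eps/(1+\snr_\eps)$, which forces $M_\eps\approx 1$ by Assumption~\ref{dfn:stepass} when $t<1$) against its value at a candidate large minimizer (near $s\approx \delta_\eps\snr_\eps$, forcing $M_\eps\approx 0$ when $t>1$). Which one wins the $\min$ is governed precisely by the sign of $r-1$, because the threshold $\delta_{\eps,\MMSE} = 2H_\eps/\log(1+\snr_\eps)$ is exactly the value of $\delta_\eps$ at which these two competing potential values coincide.

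Concretely, I would first establish two asymptotic estimates for $\cF_\eps$ evaluated at the two ``extreme'' candidate points. Using $I_\eps(s) \le \tfrac12\log(1+s)$ together with the integral/I-MMSE bookkeeping and Assumption~\ref{dfn:stepass} (which pins down $I_\eps(2H_\eps t)/H_\eps \to \min(t,1)$, as depicted in Fig.~\ref{fig:single_letter_bernoullie}), I would show: (i) at the large end, $\cF_\eps$ evaluated near $s = \delta_\eps\snr_\eps$ behaves like $H_\eps\big(1 + o(1)\big)$ after subtracting a common additive term, since $I_\eps$ has saturated to $H_\eps$ and $\phi$ contributes its minimum $0$; and (ii) at the small end, $\cF_\eps$ evaluated near $s = \delta_\eps\snr_\eps/(1+\snr_\eps)$ behaves like $\big(\tfrac{\delta_\eps}{2}\log(1+\snr_\eps)\big)(1+o(1))$ after the same subtraction, since there $I_\eps$ is still on its linear piece and the $\phi$ term supplies the $\tfrac{\delta}{2}\log(1+\snr)$ contribution. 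The ratio of these two quantities is $\delta_\eps\log(1+\snr_\eps)/(2H_\eps) = \delta_\eps/\delta_{\eps,\MMSE} \to r$, so for $r<1$ the small-$s$ branch gives a strictly smaller potential value in the limit, hence $\cF^*_\eps$ is attained near the small end and $M_\eps(\overline s^*_\eps) \to 1$; for $r>1$ the large-$s$ branch wins, so $\underline s^*_\eps$ is near the large end and $M_\eps(\underline s^*_\eps) \to 0$.

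The remaining work is to upgrade these two ``endpoint'' comparisons into statements about the \emph{actual} minimizers, which requires ruling out that $\cF_\eps$ has its global minimum somewhere in the ``middle'' of the rescaled window (i.e.\ at some $t^*_\eps$ bounded away from both $0$ and $1$, or near $1$). For this I would exploit the monotonicity/convexity structure: $\phi$ is convex, $I_\eps$ is concave (its derivative $\tfrac12 M_\eps$ is nonincreasing since $M_\eps$ is monotone in $s$), and $\cF'_\eps(s) \propto M_\eps(s) + 1/\snr_\eps - \delta_\eps/s$. I would argue that on the rescaled scale, $M_\eps(2H_\eps t)$ is essentially $0$ for $t>1$ and essentially $1$ for $t<1$, so $\cF'_\eps$ changes sign at most in a vanishing neighborhood of the two ``plateau'' regions; combined with the value comparison above, the global minimum must collapse to whichever end the comparison selects, and correspondingly $M_\eps$ at that minimizer is forced to the appropriate extreme value. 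I expect \textbf{this last step --- controlling the location of the minimizer uniformly in $\eps$ near the crossover, in particular handling the behavior for $t$ in a shrinking neighborhood of $1$ where $M_\eps$ transitions --- to be the main obstacle}, since Assumption~\ref{dfn:stepass} only gives pointwise convergence off $t=1$ and says nothing about the rate or the shape of the transition. The fix is that we do not actually need to locate the minimizer precisely: it suffices to sandwich $\cF^*_\eps$ between the two endpoint values and then note that any near-minimizer must lie in the region where $M_\eps$ has the claimed limiting value, using that the ``wrong'' region contributes a potential value that is larger by a factor bounded away from $1$.
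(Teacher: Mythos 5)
Your proposal is correct and follows essentially the same route as the paper: the two ``endpoint values'' you compare are exactly the paper's two upper bounds $\cF^*_\eps \le H_\eps$ and $\cF^*_\eps \le \tfrac{\delta_\eps}{2}\log(1+\snr_\eps)$, and your final ``fix'' (sandwiching $\cF^*_\eps$ and excluding the wrong region because its restricted minimum exceeds the global one by a margin of order $H_\eps$) is precisely what Lemmas~\ref{lem:sLB} and~\ref{lem:sUB} do. The pointwise-convergence obstacle you flag near $t=1$ is handled in the paper exactly as you suggest, by quantifying the saturation of $I_\eps$ through $L_\eps := H_\eps - I_\eps(2H_\eps)$ and showing $L_\eps = o(H_\eps)$ via the I-MMSE integral representation, so there is no gap.
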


Combining Theorem \ref{thm:MMSE_Thm} with Proposition \ref{RS_MMSE} we obtain the following Corollary.
\begin{cor}[All-or-nothing MMSE behavior]\label{cor:MMSE}
Let $r \in (0,1) \cup (1, \infty)$. For any family of triplets $ (\delta_\eps, \snr_\eps, \pi_\eps)_{\eps>0}$, 
suppose that for any $\eps>0$, $(\snr_\eps, \pi_\eps)$ satisfies the single-crossing property, $\pi_\eps$ has finite fourth moment, $( \pi_\eps)_{\eps>0}$ satisfies Assumption \ref{dfn:stepass}, and \eqref{eq:scaling_r} holds.
Then it holds that 
\begin{align}
%r \in (0, 1) \quad \implies \quad &
\lim_{\eps \to 0} \lim_{p \to \infty} \frac{  1 }{  p} \ex{\left\| \beta - \ex{\beta \mid X , Y } \right \|^2} = \begin{cases}
1 , & r \in [0,1) \\
0, & r \in (1, \infty).
\end{cases}
% \\
%  r \in (1, \infty) \quad \implies \quad & \lim_{\eps \to 0} \lim_{p \to \infty} \frac{  1 }{  p} \ex{\left\| \beta - \ex{\beta \mid Y , X } \right \|^2} = 0 
\end{align}where the inner limits are taken as $(n=n_p,p,\sigma^2=\sigma^2_p)$ scale to infinity with $p \to +\infty,$ $n/p \to \delta_\eps$ and $p/\sigma^2 \to \snr_\eps$.
\end{cor}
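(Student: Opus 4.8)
The plan is to obtain Corollary~\ref{cor:MMSE} by composing Proposition~\ref{RS_MMSE} with Theorem~\ref{thm:MMSE_Thm}, taking first the inner limit $p\to\infty$ at a fixed $\eps$ and then the outer limit $\eps\to0$. Fix $\eps>0$. Since $(\snr_\eps,\pi_\eps)$ satisfies the single-crossing property and $\pi_\eps$ has finite fourth moment, Proposition~\ref{RS_MMSE} applies and yields the one-sided bounds
\[
\liminf_{p\to\infty}\tfrac1p\ex{\|\beta-\ex{\beta\mid X,Y}\|^2}\ \ge\ M_\eps(\underline{s}^*_\eps),\qquad \limsup_{p\to\infty}\tfrac1p\ex{\|\beta-\ex{\beta\mid X,Y}\|^2}\ \le\ M_\eps(\overline{s}^*_\eps).
\]

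Next I would collapse these into a genuine limit. The single-letter MMSE function $M_\eps$ is non-increasing (the MMSE of a scalar Gaussian channel decreases in the signal-to-noise ratio~\cite{GuoVerduShamai}), and $\underline{s}^*_\eps\le\overline{s}^*_\eps$ by definition, so $M_\eps(\overline{s}^*_\eps)\le M_\eps(\underline{s}^*_\eps)$. Chaining this inequality with the two displays above and with $\liminf\le\limsup$ forces all four quantities to coincide, so the limit over $p$ exists and
\[
\lim_{p\to\infty}\tfrac1p\ex{\|\beta-\ex{\beta\mid X,Y}\|^2}\ =\ M_\eps(\underline{s}^*_\eps)\ =\ M_\eps(\overline{s}^*_\eps)
\]
for every $\eps$ in the family (equivalently: under the single-crossing property the relevant minimizer of $\cF$ is effectively unique).

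It then remains to take $\eps\to0$ via Theorem~\ref{thm:MMSE_Thm} under the scaling~\eqref{eq:scaling_r}. If $r\in(1,\infty)$ the theorem gives $\lim_{\eps\to0}M_\eps(\underline{s}^*_\eps)=0$, so the double limit is $0$; if $r\in(0,1)$ it gives $\lim_{\eps\to0}M_\eps(\overline{s}^*_\eps)=1$, so the double limit is $1$. The boundary value $r=0$ is not covered by Theorem~\ref{thm:MMSE_Thm}, and I would handle it by a comparison argument: the normalized MMSE is non-increasing in the number of observations (hence, in the $p\to\infty$ limit, non-increasing in $\delta$) and is always at most $\var(\pi_\eps)=1$. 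Comparing the given family, whose ratio $\delta_\eps/\delta_{\eps,\MMSE}$ tends to $0$, with the auxiliary family that retains $(\snr_\eps,\pi_\eps)$ and sets $\delta_\eps'=\tfrac12\,\delta_{\eps,\MMSE}$ (which has ratio $\tfrac12\in(0,1)$ and, for all small $\eps$, at least as many observations as the given family), the already-established case $r=\tfrac12$ pushes the limiting MMSE of the given family up to $1$ as well.

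Since both ingredients are quoted results, there is no essential obstacle: the only care needed is the monotonicity of $M_\eps$ used to convert Proposition~\ref{RS_MMSE}'s two one-sided estimates into an honest $p\to\infty$ limit, together with the elementary monotonicity-in-$\delta$ comparison for $r=0$. All the substantive work---controlling the extremal minimizers $\underline{s}^*_\eps$ and $\overline{s}^*_\eps$ of the RS potential $\cF$ under Assumption~\ref{dfn:stepass}---is already packaged inside Theorem~\ref{thm:MMSE_Thm}.
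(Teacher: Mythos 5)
Your proposal is correct and follows exactly the route the paper intends: the corollary is stated as an immediate combination of Proposition~\ref{RS_MMSE} with Theorem~\ref{thm:MMSE_Thm}, and your use of the monotonicity of $M_\eps$ to squeeze $M_\eps(\underline{s}^*_\eps)\le\liminf\le\limsup\le M_\eps(\overline{s}^*_\eps)\le M_\eps(\underline{s}^*_\eps)$ is precisely the (implicit) glue needed to make the inner limit exist. The extra comparison argument for $r=0$ is harmless but unnecessary, since the corollary's hypothesis already restricts to $r\in(0,1)\cup(1,\infty)$.
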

We next present our second main result.

\begin{theorem}\label{thm:AMP_Thm} 
Let $( \pi_\eps)_{\eps>0}$ satisfy Assumption \ref{dfn:stepass}. Given a number $r \in (0,1) \cup (1, \infty)$, let $ (\delta_\eps, \snr_\eps, \pi_\eps)$ be such that
\begin{align}
\lim_{\eps \to 0} \frac{\delta_\eps  }{ \delta_{\eps,\AMP}} &= r .  \label{eq:scaling_rho}
\end{align}
Then, the smallest stationary point $s^\AMP$ exhibits the all-or-nothing behavior in the small-$\eps$ limit depending on whether $r$ is greater than or less than one: 
\begin{align}
r \in (0, 1) \quad \implies \quad  & \lim_{\eps \to 0} M_\eps \left( s^\AMP_\eps  \right )  = 1\\
r \in (1, \infty) \quad \implies \quad  & \lim_{\eps \to 0} M_\eps \left( s^\AMP_\eps\right )  = 0.
\end{align}
\end{theorem}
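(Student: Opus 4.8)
The plan is to analyze the defining equation of the smallest stationary point $s^{\AMP}_\eps$ directly. From the proof of Lemma~\ref{lem:cFmin}, $\cF'_\eps(s) = 0$ is equivalent to the fixed-point equation
\begin{align}
M_\eps(s) = \frac{\delta_\eps}{s} - \frac{1}{\snr_\eps}. \label{eq:amp_fixed_pt}
\end{align}
So $s^{\AMP}_\eps$ is the smallest positive $s$ solving \eqref{eq:amp_fixed_pt}, and by Lemma~\ref{lem:cFmin} it lies in $(\delta_\eps \snr_\eps/(1+\snr_\eps), \delta_\eps \snr_\eps)$. The first step is to translate the scaling hypothesis \eqref{eq:scaling_rho} into a statement about the natural rescaled variable $t = s/(2H_\eps)$. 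Writing $s^{\AMP}_\eps = 2 H_\eps t^{\AMP}_\eps$, the bracketing from Lemma~\ref{lem:cFmin} gives
\begin{align}
\frac{\delta_\eps}{2H_\eps} \cdot \frac{\snr_\eps}{1+\snr_\eps} \;<\; t^{\AMP}_\eps \;<\; \frac{\delta_\eps}{2H_\eps}\cdot \snr_\eps, \label{eq:t_bracket}
\end{align}
and since $\delta_\eps/\delta_{\eps,\AMP} \to r$ with $\delta_{\eps,\AMP} = 2H_\eps(1+\snr_\eps)/\snr_\eps$, we get $\delta_\eps \snr_\eps/(2H_\eps(1+\snr_\eps)) \to r$, so the lower bound in \eqref{eq:t_bracket} tends to $r$. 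The behavior of the upper bound depends on whether $\snr_\eps$ stays bounded; I will need to split into cases, but the key point is that when $r<1$ the lower bracket already forces $t^{\AMP}_\eps$ to be (eventually) close to or below values where $M_\eps \to 1$, whereas when $r>1$ I must instead argue from \eqref{eq:amp_fixed_pt} that $t^{\AMP}_\eps$ cannot be too small.

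For the case $r \in (1,\infty)$: suppose for contradiction that along a subsequence $t^{\AMP}_\eps \le 1 - \eta$ for some fixed $\eta > 0$ (we may always pass to a subsequence; if $t^{\AMP}_\eps \to \infty$ or hovers above $1$, Assumption~\ref{dfn:stepass} with $M_\eps(2H_\eps t^{\AMP}_\eps) \to 0$ and \eqref{eq:amp_fixed_pt} gives $\delta_\eps/s^{\AMP}_\eps \to 1/\snr_\eps$, i.e.\ $t^{\AMP}_\eps \to \delta_\eps\snr_\eps/(2H_\eps) \ge$ lower bracket $\to r > 1$, which is consistent and yields $M_\eps(s^{\AMP}_\eps)\to 0$ as desired). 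On the assumed subsequence, \eqref{eq:Meps_cond} gives $M_\eps(s^{\AMP}_\eps) \to 1$, so from \eqref{eq:amp_fixed_pt}, $\delta_\eps/s^{\AMP}_\eps - 1/\snr_\eps \to 1$; combined with $1/\snr_\eps \ge 0$ this gives $\limsup \delta_\eps/s^{\AMP}_\eps \le 1 + \limsup 1/\snr_\eps$ and $\liminf \delta_\eps/s^{\AMP}_\eps \ge 1$, hence $t^{\AMP}_\eps = \delta_\eps/(2H_\eps)\cdot(s^{\AMP}_\eps/\delta_\eps)^{-1}\cdots$ — more cleanly, $s^{\AMP}_\eps \le \delta_\eps$ eventually, so $t^{\AMP}_\eps \le \delta_\eps/(2H_\eps)$; but the lower bracket also gives $t^{\AMP}_\eps > \delta_\eps\snr_\eps/(2H_\eps(1+\snr_\eps)) \to r > 1$, contradicting $t^{\AMP}_\eps \le 1-\eta < 1$. (The one delicate point is reconciling $t^{\AMP}_\eps \le \delta_\eps/(2H_\eps)$ with the lower bound; here one uses that on the subsequence $M_\eps(s^{\AMP}_\eps)\to1$ forces $\delta_\eps/s^{\AMP}_\eps \to 1 + 1/\snr_\eps$ exactly, i.e.\ $s^{\AMP}_\eps = \delta_\eps\snr_\eps/(1+\snr_\eps)(1+o(1))$, making $t^{\AMP}_\eps \to r > 1$, again contradicting $t^{\AMP}_\eps < 1$.) Either way we conclude $M_\eps(s^{\AMP}_\eps) \to 0$.

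For the case $r \in (0,1)$: by the lower bracket in \eqref{eq:t_bracket}, $\liminf t^{\AMP}_\eps \ge r$ is the wrong direction, so instead I use the upper bracket together with the structure of $s^{\AMP}$ as the \emph{smallest} stationary point. Consider the candidate value $s_0 = \delta_\eps\snr_\eps/(1+\snr_\eps)$, the left end of the bracketing interval. On $(0, s_0]$ we have $\cF'_\eps < 0$ strictly (proof of Lemma~\ref{lem:cFmin}), and I claim that for small $\eps$ there is a stationary point just slightly above $s_0$ with rescaled value near $r < 1$: indeed at $t$ near $r$, $M_\eps(2H_\eps t) \to 1$ while the right side of \eqref{eq:amp_fixed_pt} at $s = 2H_\eps t$ equals $\delta_\eps/(2H_\eps t) - 1/\snr_\eps \to 1$ when $t \to \delta_\eps\snr_\eps/(2H_\eps(1+\snr_\eps)) \to r$; matching these, the function $g_\eps(t) := M_\eps(2H_\eps t) - \delta_\eps/(2H_\eps t) + 1/\snr_\eps$ is negative for $t$ slightly below the relevant value and its sign change (which exists since $g_\eps \to +$ for large $t$ by $M_\eps \to 0$, $-\delta_\eps/(2H_\eps t)\to 0$) occurs at a $t$ that must converge to $r$ (any subsequential limit $t_* $ of the first zero satisfies: if $t_* < 1$ then $M_\eps \to 1$ forces $\delta_\eps/(2H_\eps t_*) - 1/\snr_\eps \to 1$, i.e.\ $t_* = r$; if $t_* \ge 1$ then by the lower bracket $t_* \ge r$ always and we'd need to rule out $t_* > r$, which follows because below $t = r + $ (small) the lower bracket forces we're still in the region... ). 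Thus $s^{\AMP}_\eps = 2H_\eps t^{\AMP}_\eps$ with $t^{\AMP}_\eps \to r < 1$, whence $M_\eps(s^{\AMP}_\eps) \to 1$ by Assumption~\ref{dfn:stepass}.

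\textbf{Main obstacle.} The delicate part is the $r \in (0,1)$ case: I must show the \emph{smallest} stationary point has rescaled value converging exactly to $r$ (not to some larger value, and not failing to exist near $r$). This requires (i) an existence/intermediate-value argument that a zero of $g_\eps$ occurs with rescaled coordinate near $r$, using only pointwise convergence of $M_\eps$, and (ii) controlling that no stationary point occurs with rescaled coordinate strictly between $0$ and $r$ — which is exactly guaranteed by Lemma~\ref{lem:cFmin}'s bracketing $s^{\AMP}_\eps > \delta_\eps\snr_\eps/(1+\snr_\eps)$, i.e.\ $t^{\AMP}_\eps > $ (a quantity $\to r$). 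So in fact (ii) is free, and the real work is (i): a careful limiting argument showing that for every $\eta>0$ and all small $\eps$, $g_\eps$ changes sign in $(r-\eta, r+\eta)$ after rescaling — handled by evaluating $g_\eps$ at two fixed rescaled points straddling $r$ and invoking \eqref{eq:Meps_cond} at each. A secondary nuisance throughout is that $\snr_\eps$ may or may not be bounded away from $0$ and $\infty$, so the arguments should be phrased in terms of $\delta_\eps/(2H_\eps)$, $\snr_\eps$ and their products without assuming convergence of $\snr_\eps$ itself; passing to subsequences along which $\snr_\eps/(1+\snr_\eps)$ converges makes this clean.
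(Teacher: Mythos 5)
Your proposal is correct and follows essentially the same route as the paper: the paper packages your intermediate-value argument on $g_\eps(s) \propto \cF_\eps'(s)$ as Lemma~\ref{lem:sAMPbounds} (via the function $\delta_{\mathrm{FP}}(s) = s(M_\eps(s)+1/\snr_\eps)$, which vanishes at $s=0$), and then, exactly as you sketch, evaluates it at a single fixed rescaled test point on the correct side of $t=1$ where Assumption~\ref{dfn:stepass} applies. The ``main obstacle'' you flag for $r\in(0,1)$ --- pinning the first zero to exactly $r$ --- is not actually needed: the conclusion only requires the one-sided bound $s^\AMP_\eps < (r+\eta)\,2H_\eps$ with $r+\eta<1$, which your single sign evaluation of $g_\eps$ at $t=r+\eta$ (plus monotonicity of $M_\eps$ in $s$, used implicitly by both you and the paper) already delivers.
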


For the result, for $T \in \mathbb{N}$ let $\hat{\beta}_{\AMP,T}(Y,X)$ the output of the AMP estimator \cite[Section II.C]{Reeves:2012} with input data $(X,Y)$ after $T$ iterations. Combining Theorem \ref{thm:AMP_Thm} with Proposition \ref{propAMP} we obtain the following Corollary on the performance of AMP.

\begin{cor}[All-or-nothing AMP behavior]\label{cor:AMP}
Let $r \in (0,1) \cup (1, \infty)$. For any family of triplets $(\delta_\eps,\snr_\eps,\pi_\eps)_{\eps>0}$, suppose that each $\pi_{\eps}$ has a finite fourth moment, the family $(\pi_\eps)_{\eps>0}$ satisfies Assumption \ref{dfn:stepass}, and 
\eqref{eq:scaling_rho} holds.
Then it holds that 
\begin{align}
%& \rho \in (0, 1) \notag \\
%&  \implies 
\lim_{\eps \to 0} \lim_{T \rightarrow +\infty} \lim_{p \to \infty} \frac{  1 }{  p} \ex{\left\| \beta -  \hat{\beta}_{\AMP,T}(X,Y)  \right \|^2}  =\begin{cases}
1 , & r \in [0,1) \\
0, & r \in (1, \infty).
\end{cases}
% 1  \\
%  &\rho \in (1, \infty) \notag \\
%  & \implies \lim_{\eps \to 0} \lim_{T \rightarrow +\infty} \lim_{p \to \infty} \frac{  1 }{  p} \ex{\left\| \beta -  \hat{\beta}_{\AMP,T}(Y,X)  \right \|^2} = 0 
\end{align}
where the inner limits are taken as $(n=n_p,p,\sigma^2=\sigma^2_p)$ scale to infinity with $p \to +\infty,$ $n/p \to \delta_\eps$ and $p/\sigma^2 \to \snr_\eps$.
\end{cor}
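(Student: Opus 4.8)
The plan is to read off Corollary~\ref{cor:AMP} from Theorem~\ref{thm:AMP_Thm} and Proposition~\ref{propAMP}: the corollary is essentially a packaging of these two results, and the only thing beyond a citation is a short monotonicity remark needed to absorb the endpoint $r=0$. Note first that no interchange of limits is at stake: the three limits are taken in the nested order $p\to\infty$, then $T\to\infty$, then $\eps\to0$, so the inner two are resolved for each fixed $\eps$ before $\eps$ moves.

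\emph{Reduction to the single-letter quantity.} Fix $\eps>0$. Since $\pi_\eps$ has finite fourth moment, Proposition~\ref{propAMP} applies to the triplet $(\delta_\eps,\snr_\eps,\pi_\eps)$ and yields
\[
\lim_{T\to\infty}\ \lim_{p\to\infty}\ \frac1p\,\ex{\bigl\|\beta-\hat\beta_{\AMP,T}(X,Y)\bigr\|^2}\;=\;M_\eps\!\left(s^\AMP_\eps\right),
\]
where the inner limits run along $n/p\to\delta_\eps$ and $p/\sigma^2\to\snr_\eps$, and $s^\AMP_\eps=\inf\{s:\cF_\eps'(s)=0\}$ is the smallest stationary point of the potential $\cF_\eps$ formed from $(\delta_\eps,\snr_\eps,\pi_\eps)$. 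Consequently the iterated limit in the corollary equals $\lim_{\eps\to0}M_\eps(s^\AMP_\eps)$, and it suffices to evaluate the latter.

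\emph{The cases of $r$.} For $r\in(0,1)$ and for $r\in(1,\infty)$, hypothesis~\eqref{eq:scaling_rho} is exactly the hypothesis of Theorem~\ref{thm:AMP_Thm}, which gives $\lim_{\eps\to0}M_\eps(s^\AMP_\eps)=1$ and $=0$ respectively; together with the reduction this settles $r\neq0$. For the endpoint $r=0$ (so $\delta_\eps/\delta_{\eps,\AMP}\to0$) I would argue by monotonicity in the undersampling ratio. By the I-MMSE relation, $\cF_\eps'(s)\propto M_\eps(s)+1/\snr_\eps-\delta/s$, so for $s>0$ the stationarity condition reads $g_\eps(s):=s\bigl(M_\eps(s)+1/\snr_\eps\bigr)=\delta$; since $g_\eps$ is continuous on $[0,\infty)$ with $g_\eps(0)=0$, the intermediate value theorem shows that $s^\AMP_\eps=\min\{s>0:g_\eps(s)=\delta\}$ is non-decreasing in $\delta$, and as $M_\eps$ is non-increasing it follows that $M_\eps(s^\AMP_\eps)$ is non-increasing in $\delta$. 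Now fix any $r'\in(0,1)$ and set $\delta_\eps':=r'\,\delta_{\eps,\AMP}$, which leaves $\delta_{\eps,\AMP}$ unchanged; for all small $\eps$ we have $\delta_\eps\le\delta_\eps'$, hence $M_\eps(s^\AMP_\eps)$ evaluated at $\delta=\delta_\eps$ is at least its value at $\delta=\delta_\eps'$. The latter tends to $1$ by Theorem~\ref{thm:AMP_Thm} applied to $(\delta_\eps',\snr_\eps,\pi_\eps)$ (ratio $r'\in(0,1)$), and since $M_\eps\le1$ always, a squeeze gives $\lim_{\eps\to0}M_\eps(s^\AMP_\eps)=1$, which is the claim for $r\in[0,1)$.

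\emph{Main obstacle.} There is no genuine difficulty once Theorem~\ref{thm:AMP_Thm} is available; the one point requiring a short argument is the monotonicity of $M_\eps(s^\AMP_\eps)$ in $\delta$ used for $r=0$, which is the continuity computation above. Otherwise it only remains to check that the invoked hypotheses match the corollary's: finite fourth moment of each $\pi_\eps$ for Proposition~\ref{propAMP}, and Assumption~\ref{dfn:stepass} together with the scaling~\eqref{eq:scaling_rho} for Theorem~\ref{thm:AMP_Thm}. (Unlike the MMSE corollary, no single-crossing hypothesis enters here, since Proposition~\ref{propAMP} does not need one.)
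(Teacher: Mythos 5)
Your proposal is correct and follows exactly the paper's route: the corollary is obtained by combining Proposition~\ref{propAMP} (which reduces the iterated limit to $M_\eps(s^{\AMP}_\eps)$ under the finite fourth moment hypothesis) with Theorem~\ref{thm:AMP_Thm}, and the paper gives no further argument. Your additional monotonicity argument for the endpoint $r=0$ is sound but not needed, since the corollary's hypothesis restricts $r$ to $(0,1)\cup(1,\infty)$ even though the displayed conclusion writes $r\in[0,1)$.
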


\subsection{Illustration of Theorems~\ref{thm:MMSE_Thm} and \ref{thm:AMP_Thm} via normalized potential function}

To provide insight into the behavior described by Theorems~\ref{thm:MMSE_Thm} and \ref{thm:AMP_Thm}, we consider the scaling $\delta_{\eps} = 2 r H_\eps  /  \log(1 + \snr)$ where the pair $(\snr,r)$ is considered fixed with respect to $\eps>0$. Note that in this scaling, $\delta_{\eps}=r \delta_{\eps,\mathrm{MMSE}}$. Define the normalized potential function
\begin{align}
\tilde{\cF}_{\eps,r}(t) &: = \frac{ \cF ( 2 H_\eps t)}{ H_\eps}   =  \frac{ I_\eps ( 2 H_\eps t)}{ H_\eps}  + \frac{r }{ \log(1 + \snr)}  \phi\left(  \frac{ t  \log(1 + \snr)}{ r \snr}  \right) .
\end{align}
%\nbr{JX. Did you miss a factor $r$ in the denominator inside $\phi$? In other words, it seems to be $\phi\left(  \frac{ t  \log(1 + \snr)}{ r \snr}  \right)$.}
Note that only the first term depends on $\eps$. Under Assumption~\ref{dfn:stepass}, using
the I-MMSE relation we have $I_\eps ( 2 H_\eps t) /  H_\eps  \to  1 \wedge t$ as $\eps \to 0$. Thus for all $t>0$, \begin{align}\label{limitF} \lim_{\eps \to 0} \tilde{\cF}_{\eps,r}(t) = \tilde{\cF}_{0,r}(t), \end{align} where 
\begin{align}
\tilde{\cF}_{0,r}(t) &:=  1 \wedge t + \frac{r }{ \log(1 + \snr)}  \phi\left(  \frac{ t  \log(1 + \snr)}{ r \snr}  \right). 
\end{align}
For $r \in (0,1) \cup (1, \infty)$,  the function $\tilde{\cF}_{0,r}(t)$ has a unique global minimizer given by
\begin{align}
t^* &=
\begin{dcases}
\frac{r \snr }{ (1 + \snr) \log(1 + \snr)}, & r \in (0,1) \\
 \frac{r\snr}{ \log(1 + \snr)}  & r \in (1, \infty).
\end{dcases}
\end{align}
Importantly, using the elementary inequality $x/\left(x+1\right) \leq \log \left(1+x\right) \leq x$ for all $x>-1$, we deduce
\begin{align*} 
r \in (0, 1) \quad \implies \quad  & t^*   \in (0,1) \\
r \in (1, \infty) \quad \implies \quad  & t^*  \in (1,\infty).
\end{align*}This dichotomy together with Assumption~\ref{dfn:stepass} and equality \eqref{limitF} suggest the following all-or-nothing behavior; as $\epsilon \to 0$, when $r<1$ the MMSE converges to one and when $r>1$ the MMSE  converges to zero. This is the same all-or-nothing behavior  described and rigorously established in Theorem~\ref{thm:MMSE_Thm}. Furthermore, the smallest stationary point $t^\AMP$ is given by 
\begin{align}
t^\AMP&=
\begin{dcases}
\frac{r \snr }{ (1 + \snr) \log(1 + \snr)}, & r \in (0,r^\AMP) \\
 \frac{r\snr}{ \log(1 + \snr)} , & r \in (r^\AMP, \infty),
\end{dcases}
%r \in (0, r^\AMP) \quad \implies \quad  &t^\AMP = \frac{r\snr}{(1+\snr)\log(1+\snr)} \in (0,1) \\
%r \in ( r^\AMP, \infty) \quad \implies \quad  & t^\AMP = \frac{r \snr}{ \log(1 + \snr)}   \in (0,\infty)  
%r \in (r^\AMP, \infty) \quad \implies \quad  & t^\AMP =  \frac{\snr}{ \log(1 + \snr)} \in (1,\infty).
\end{align}where $r^\AMP: = \delta_{\eps, \AMP} / \delta_{\eps, \MMSE} = (1 + 1/\snr) \log(1+\snr)$. Similar to above, we deduce
\begin{align*} 
r \in (0, r^\AMP) \quad \implies \quad  & t^\AMP  \in (0,1) \\
r \in (r^\AMP, \infty) \quad \implies \quad  & t^\AMP  \in (1,\infty).
\end{align*}This dichotomy together with Assumption~\ref{dfn:stepass} and equality \eqref{limitF}, suggest the following all-or-nothing behavior; as $\epsilon \to 0$, when $r<r^\AMP$ the MSE of the AMP converges to one and when $r>r^\AMP$ the MSE of the AMP converges to zero. This is the  same all-or-nothing behavior  described and rigorously established in Theorem~\ref{thm:AMP_Thm}.

This behavior of the normalized potential function is illustrated graphically in Figure~\ref{fig:pot_funct} where both $\tilde{\cF}_{\eps,r}(t)$ and $\tilde{\cF}_{0,r}(t)$ are plotted as a function of $t$ for various $r$.

\begin{figure}
\centering
\input{potential_small_eps.tex} \input{potential_zero_eps.tex} 
\caption{\label{fig:pot_funct}Illustration of the normalized potential functions, 
$\tilde{\cF}_{\eps,r}(t)$ for $\eps=10^{-16}$ (top plot) and $\tilde{\cF}_{0,r}(t)$(bottom plot), where $\snr=5$ and $r$ varies in $(0,+\infty)$. Black-colored curves correspond to $r \in (0,1)$, magenta-colored curves correspond to $r \in (1,r^\AMP)$ and cyan-colored curves correspond to $r \in (r^\AMP,\infty)$. Note that the global minimizer of $\tilde{\cF}_{0,r}(t)$ transitions from being less than one to bigger than one exactly at $r=1$, while the smallest stationary point transitions from being less than one to bigger than one exactly at $r=r^{\AMP}$. A similar approximate behavior takes place for $\tilde{\cF}_{\eps,r}(t)$ with $\eps=10^{-16}$. 
%\nbr{JX. The bottom plot corresponds to the limit $\epsilon \to 0$? Also, If I understand it correctly,
%each curve corresponds to the function $\tilde{\cF}_{\eps,r}(t)$ for a given $r$? Maybe a bit further clarification is helpful here.} \nb{GR: Yes, I agree}} 
}
\end{figure}

\section{Application: Sparse binary regression}

We now present our main application of our two results to sparse binary regression, where $\beta_i \iiddistr \Bern(\eps)$. To this end, we first consider the case where
$\beta_i$ is i.i.d.\ drawn from the following 
two-point distribution:
\begin{align}
\pi_\eps = (1 - \eps)\,  \delta_{\mu_1}  + \eps \, \delta_{ \mu_2}, \label{eq:pi_two_point} 
\end{align}
where  $\delta_x$ denotes a Dirac distribution with mass at $x \in \reals$, and $\mu_ 1 = - \sqrt{ \eps /(1- \eps)}$ and $\mu_ 2 =  \sqrt{ (1- \eps)/\eps}$ are chosen such that $\pi_\eps $ has zero mean and unit variance. The following Lemma holds for the family of MMSE functions $(M_\eps(s))_{\eps >0}:$
 
\begin{lemma}\label{lem:two_point}
The distribution $\pi_\eps$ in \eqref{eq:pi_two_point} has entropy $H_\eps = - \eps \log \eps  - (1-\eps) \log(1-\eps)$ and MMSE function 
\begin{align}
M_\eps(s) 
& =    \ex{ \frac{1}{  1  - \eps +   \eps  \,   \exp\left(   \frac{ s}{2  \eps (1- \eps) } +    \sqrt{\frac{s}{ \eps (1-\eps)} }  N   \right)}},
\end{align}
where $N \sim \normal(0,1)$. Furthermore, the distribution $\pi_\eps$ satisfies the single-crossing condition \cite{reeves:2019c} for all $\snr >0$ and
\begin{align}
\lim_{\eps \to 0} H_\eps /(\eps \log 1/\eps) &= 1\\
\lim_{\eps \to 0} \sup_{s >  0 }  \left |  M_\eps ( s) - Q\left(  \frac{ s- 2 \eps \log(1/\eps)}{ 2 \sqrt{s \eps} } \right)  \right | &= 0, 
\end{align}
where $Q(z) = \int_z^\infty (2\pi)^{-1/2} \exp(- t^2/2) \, d t$. 
\end{lemma}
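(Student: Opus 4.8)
The plan is to dispatch the four assertions separately, with the uniform $Q$-approximation being the only real obstacle.

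\emph{Entropy, its limit, and single crossing.} Since $\pi_\eps$ is the image of $\Bern(\eps)$ under the injective affine map $b\mapsto \mu_1+b(\mu_2-\mu_1)$, its Shannon entropy equals that of $\Bern(\eps)$, i.e.\ $H_\eps=-\eps\log\eps-(1-\eps)\log(1-\eps)$. Writing $H_\eps=\eps\log(1/\eps)+(1-\eps)\log\tfrac{1}{1-\eps}$ and using $\log\tfrac{1}{1-\eps}=\eps+O(\eps^2)$ gives $(1-\eps)\log\tfrac{1}{1-\eps}=O(\eps)=o(\eps\log(1/\eps))$, whence $H_\eps/(\eps\log(1/\eps))\to1$. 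The single-crossing property for $\pi_\eps$ I would obtain from the sufficient condition of \cite{reeves:2019c}: because $\pi_\eps$ is supported on two points, the relevant object reduces to a scalar function whose unique sign change can be checked directly for every $\snr>0$ (monotonicity of the posterior variance of a binary signal in a scalar Gaussian channel).

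\emph{The MMSE formula.} I would first reduce to a Bernoulli signal. Write $\beta_0=\mu_1+B/\sqrt{\eps(1-\eps)}$ with $B\sim\Bern(\eps)$, noting $(\mu_2-\mu_1)^2=1/(\eps(1-\eps))$. Removing the deterministic shift $\sqrt s\,\mu_1$ from the observation and rescaling shows $M_\eps(s)=\mmse(\beta_0\mid\sqrt s\,\beta_0+N)=\tfrac{1}{\eps(1-\eps)}\,\mmse\!\left(B\mid\sqrt{s'}\,B+N\right)$ with effective snr $s'=s/(\eps(1-\eps))$. Next I use the identity $\mmse(B\mid Y)=\mathbb{E}\!\left[B\bigl(1-\mathbb{P}(B=1\mid Y)\bigr)\right]$, which holds because $B^2=B$. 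A direct computation of the Gaussian posterior gives $\mathbb{P}(B=1\mid Y)=\bigl(1+\tfrac{1-\eps}{\eps}e^{s'/2-\sqrt{s'}Y}\bigr)^{-1}$; evaluating on the event $\{B=1\}$, i.e.\ with $Y=\sqrt{s'}+N$, yields $1-\mathbb{P}(B=1\mid Y)=\bigl(1+\tfrac{\eps}{1-\eps}e^{s'/2+\sqrt{s'}N}\bigr)^{-1}$, so $\mmse(B\mid Y)=\eps\,\mathbb{E}_N\bigl[(1+\tfrac{\eps}{1-\eps}e^{s'/2+\sqrt{s'}N})^{-1}\bigr]$. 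Multiplying through by $1-\eps$ and substituting $s'/2=s/(2\eps(1-\eps))$ and $\sqrt{s'}=\sqrt{s/(\eps(1-\eps))}$ produces exactly the claimed expression; this part is entirely routine once the reduction and the posterior identity are in place.

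\emph{The uniform $Q$-approximation.} Write $M_\eps(s)=\mathbb{E}[g(N)]$ with $g(x)=(1-\eps+\eps e^{a+bx})^{-1}$, $a=s/(2\eps(1-\eps))$, $b=\sqrt{s/(\eps(1-\eps))}$ (so $b^2=2a$). Then $g$ is a logistic-type smoothing of $\mathbf{1}\{x<\tau_\eps(s)\}$ with threshold $\tau_\eps(s)=(\log(1/\eps)-a)/b$, and $\mathbb{P}(N<\tau_\eps(s))=\Phi(\tau_\eps(s))=Q(-\tau_\eps(s))=Q\!\left(\tfrac{s-2\eps(1-\eps)\log(1/\eps)}{2\sqrt{s\,\eps(1-\eps)}}\right)$, which is the target $Q(\phi_\eps(s))$, $\phi_\eps(s)=\tfrac{s-2\eps\log(1/\eps)}{2\sqrt{s\eps}}$, but with $\eps$ replaced by $\eps(1-\eps)$. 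So it suffices to show (a) $\sup_{s>0}|M_\eps(s)-\Phi(\tau_\eps(s))|\to0$ and (b) $\sup_{s>0}|Q(-\tau_\eps(s))-Q(\phi_\eps(s))|\to0$. For (a): off the window $W=\{|x-\tau_\eps(s)|\le\log(1/\delta)/b\}$ one has $|g(x)-\mathbf{1}\{x<\tau_\eps(s)\}|\le 2\delta+\tfrac{\eps}{1-\eps}$ for any $\delta\in(0,\tfrac12)$, and $g\le\tfrac{1}{1-\eps}$, so $|M_\eps(s)-\Phi(\tau_\eps(s))|\le 2\delta+\tfrac{\eps}{1-\eps}+\tfrac{2}{1-\eps}\cdot\tfrac{\log(1/\delta)}{\sqrt{2\pi}\,b}$; choosing $\delta=e^{-\sqrt b}$ makes this $\to0$ uniformly over any regime where $b\to\infty$, in particular over $s\ge\eps\sqrt{\log(1/\eps)}$. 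Over the complementary regime $s\le\eps\sqrt{\log(1/\eps)}$ both quantities are within $o(1)$ of $1$: there $\tau_\eps(s)\ge\sqrt{1-\eps}\,(\log(1/\eps))^{3/4}(1-o(1))\to\infty$, while $|1-g(x)|\le\tfrac{\eps}{1-\eps}+\min\!\bigl(1,\tfrac{\eps}{1-\eps}e^{a+bx}\bigr)$ and, using $b^2=2a$ and Gaussian tail bounds, $\mathbb{E}[\min(1,\tfrac{\eps}{1-\eps}e^{a+bN})]=o(1)$ uniformly there (the dominant term being $\tfrac{\eps}{1-\eps}e^{2a}=e^{-\log(1/\eps)(1-o(1))}$ since $a=O(\sqrt{\log(1/\eps)})$). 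For (b): when $|\tau_\eps(s)|$ stays bounded one has $s=2\eps\log(1/\eps)(1+o(1))$ and a short computation gives $|\phi_\eps(s)-(-\tau_\eps(s))|=O(\eps\sqrt{\log(1/\eps)})\to0$, so the $\tfrac{1}{\sqrt{2\pi}}$-Lipschitz bound for $Q$ closes it; when $|\tau_\eps(s)|$ is large, $\phi_\eps(s)$ and $-\tau_\eps(s)$ have the same sign and comparable magnitude, so $Q$ of each is $o(1)$. Combining (a) and (b) gives the uniform statement.

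The main obstacle is the uniformity in (a): the transition window of $g$ has width $\asymp\log(1/\delta)/b$ on the $N$-axis, which is controllably small only when $b=\sqrt{s/(\eps(1-\eps))}\to\infty$, so near $s\to0$ (where $b\to0$) that estimate fails and must be replaced by the ``both quantities $\approx1$'' bound; one then has to position the cutoff between the two regimes — jointly with the $s$-dependent choice $\delta=e^{-\sqrt b}$ — so that the two estimates overlap with no gap, while making sure the critical transition scale $s\asymp\eps\log(1/\eps)$ (where $b\asymp\sqrt{\log(1/\eps)}\to\infty$, which is exactly what makes the step function emerge) falls inside the window-argument regime. Everything else in the lemma is bookkeeping.
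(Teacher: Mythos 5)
The paper states this lemma without proof (the appendix only proves the potential-function lemmas and Theorems~\ref{thm:MMSE_Thm} and \ref{thm:AMP_Thm}), so there is no in-paper argument to compare against; I can only assess your proposal on its own terms, and it is essentially correct and complete. The entropy computation and the limit $H_\eps/(\eps\log(1/\eps))\to 1$ are immediate as you say. Your derivation of the MMSE formula is right: the reduction $\beta_0=\mu_1+B/\sqrt{\eps(1-\eps)}$ with $(\mu_2-\mu_1)^2=1/(\eps(1-\eps))$, the identity $\mmse(B\mid Y)=\ex{B(1-\bPr(B=1\mid Y))}$ from $B^2=B$, and the evaluation of the posterior on $\{B=1\}$ reproduce the stated expression exactly. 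The uniform $Q$-approximation argument is also sound: your two regimes $s\ge \eps\sqrt{\log(1/\eps)}$ (window argument, valid since there $b\ge(\log(1/\eps))^{1/4}/\sqrt{1-\eps}\to\infty$) and $s\le\eps\sqrt{\log(1/\eps)}$ (both quantities within $o(1)$ of $1$) cover all $s>0$ with no gap, and the critical scale $s\asymp\eps\log(1/\eps)$ sits safely in the first regime. For step (b), note that $-\tau_\eps(s)\sqrt{1-\eps}=\phi_\eps(s)+\eps^{3/2}\log(1/\eps)/\sqrt{s}$, so the discrepancy $\phi_\eps(s)-(-\tau_\eps(s))$ contains a term of order $\sqrt{s\eps}$ that is \emph{not} uniformly small over all $s$; your resolution — Lipschitz bound near the transition where $s=2\eps\log(1/\eps)(1+o(1))$, and "both tails are $o(1)$ or both are $1-o(1)$" elsewhere — is the right one, though a clean write-up should make the threshold between "bounded" and "large" $|\tau_\eps(s)|$ explicit (a slowly diverging cutoff $T_\eps$ works). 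The only assertion you leave at the level of a citation is the single-crossing property; since the paper itself merely invokes the sufficient condition of the cited reference without verification, this matches the level of detail the authors chose, but it is the one place where your argument is a pointer rather than a proof.
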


An immediate implication of the result is that the family of distributions $(\pi_\eps)_{\eps>0}$ satisfies Assumption~\ref{dfn:stepass} as well as the conditions of Corollaries \ref{cor:MMSE} and \ref{cor:AMP}. 
%Since clearly each $\pi_{\epsilon}$ has finite fourth moment and according to Remark \ref{bernSC} for each $\snr_\eps$ $(\snr_\eps,\pi_\eps)$ satisfy the single-crossing property, both Corollaries \ref{cor:MMSE} and \ref{cor:AMP} can be applied.
Hence, all-or-nothing phase transitions hold for the limiting 
$\MMSE$ around $\delta_{\eps, \MMSE}$  and for the MSE of the AMP around $\delta_{\eps, \AMP}.$ Using that $\lim_{\eps \to 0} H_\eps /(\eps \log 1/\eps) = 1$, we can further simplify the phase transition points given in \eqref{MMSEthr0}, \eqref{Algthr0} by observing \begin{align} \label{MMSEsparse1}\lim_{\eps \to 0} \delta_{\eps, \MMSE} / \left( \frac{ 2 \eps \log(1/\eps)}{ \log(1 + \snr_\eps)} \right)=1 \end{align}and
\begin{align} \label{AMPsparse1}\lim_{\eps \to 0} \delta_{\eps, \AMP} / \left( \frac{ 2(1+\snr_{\eps}) \eps \log(1/\eps)}{ \snr_\eps} \right)=1. \end{align}

Next, we extend the above results to the sparse binary regression problem of interest, where  $\beta_i \iiddistr \Bern(\epsilon)$.  We denote by $k=\eps p$ the (expected) number of non-zero coordinates of $\beta$. 
Define
$$
\tilde{\beta} = \frac{\beta-\ex{\beta}}{\sqrt{\eps(1-\eps)}}.
$$
Then $\tilde{\beta}_i \iiddistr \pi_\eps$ as given in \eqref{eq:pi_two_point}.
Moreover, define
$$
\tilde{Y}= \frac{Y-X\ex{\beta}}{
\sqrt{\eps(1-\eps)}
}, \quad
\tilde{W} = \frac{W}{ \sqrt{\eps(1-\eps)} }.
$$
Then it follows that 
$\tilde{Y}=X\tilde{\beta} + \tilde{W}$.
Since 
$\tilde{W}_i \iiddistr \normal(0,
\tilde{\sigma}^2)$ with $\tilde{\sigma}=\sigma/\sqrt{\eps(1-\eps)}$, it follows that 
\begin{align}
\snr_\eps = \frac{p}{\tilde{\sigma}^2}
=\frac{ p \eps (1-\eps) }{\sigma^2}.
\label{eq:snr_binary}
%k(1-k/p)/\sigma^2. 
%\] 
\end{align}

Hence, according to Corollary \ref{cor:MMSE}, (\ref{MMSEsparse1}),
\eqref{eq:snr_binary},
we obtain that the limiting $\MMSE$ exhibits an all-or-nothing behavior at $$\delta_{\eps, \MMSE}=2 \eps \log ( 1/\eps)/\log (1+\epsilon(1-\epsilon)p/\sigma^2),
$$ which using $k=\eps p$ as $\eps \to 0$ simplifies with negligible multiplicative error to $$\delta_{\eps, \MMSE}=2\left(k/p\right) \log (p/k)/\log (1+k/\sigma^2).$$ Note that this is the exact information-theoretic threshold for which an all-or-nothing phenomenon has been proven to hold when $\limsup_p \log k/\log p<0.5$ in \cite{reeves:2019b}.

Similarly, according to Corollary \ref{cor:AMP}, (\ref{AMPsparse1}), and \eqref{eq:snr_binary}, the limiting MSE of the AMP exhibits an all-or-nothing behavior at: $$
\delta_{\eps, \AMP}=2\left(1+ \frac{\sigma^2}{p\eps(1-\eps)} \right) \eps \log ( 1/\eps),
$$ 
which using $k=\eps p$ as $\eps \to 0$  simplifies with negligible multiplicative error to $$\delta_{\eps, \AMP}=2\left(k+\sigma^2\right)\log (p/k)/p.$$ Note that this is the exact computational threshold for a number of computationally efficient methods in the literature such as LASSO or Orthogonal Matching Pursuit (see \cite{Zadik17a,Zadik17b} for references). 
Our result suggests that the threshold corresponds to a barrier also for AMP in a strong sense.

\bibliographystyle{IEEEtran}

%\bibliography{short_names,allerton} 
\bibliography{camsap19_arxiv.bbl}

%\end{document}

\appendix
\section{Properties of RS and AMP Formulas under finite entropy}

This section describes some properties of the potential function $\cF(s)$ defined in (\ref{pote}), such as its minimum value  $\cF^*$, the upper and lower minimizers $(\overline{s}^*, \underline{s}^*)$, and the smallest stationary point $s^\AMP$, as a function of the problem parameters $(\delta, \snr, \pi)$. Throughout, we make the additional assumption that $\pi$  is a discrete distribution  with finite entropy $H = H(\pi)$. 

\begin{lemma}\label{lem:Ibounds}
%Let  $\pi$  a distribution on $\reals$ with unit variance and finite entropy $H = H(\pi)$. 
The mutual information function  $I(s)$ of the single-letter channel under input distribution $\pi$ defined in (\ref{MIS}) satisfies
\begin{align}
\min\left\{ \frac{s}{2} , H \right\} -  L \le   I(s) \le  \min\left\{ \frac{s}{2} , H \right\}
\end{align}
for all $s \in [0, \infty)$, where \begin{align} \label{ell} L   := H -  I(2 H). \end{align}
\end{lemma}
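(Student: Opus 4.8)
The plan is to establish the two-sided bound by combining three elementary facts about the mutual information function $I(s)$: it is non-decreasing, it is concave, and it satisfies the universal upper bounds $I(s) \le s/2$ and $I(s) \le H$. The upper bound $I(s) \le \min\{s/2, H\}$ is then immediate, since $I(s) \le s/2$ holds by the general inequality recalled in the Preliminaries (coming from $I(s) \le \frac12\log(1+s) \le s/2$), and $I(s) \le H$ holds because $I(s) = H(\beta_0) - H(\beta_0 \mid \sqrt{s}\beta_0 + N) \le H(\beta_0) = H$ for a discrete distribution with finite entropy. So the entire content is the lower bound $I(s) \ge \min\{s/2, H\} - L$ with $L = H - I(2H)$.

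For the lower bound I would split into two regimes according to whether $s \le 2H$ or $s > 2H$. In the regime $s \le 2H$, we have $\min\{s/2, H\} = s/2$, so the claim is $I(s) \ge s/2 - L$. Here I would use concavity of $I$ together with $I(0) = 0$: the function $s \mapsto I(s)$ lies above the chord joining $(0,0)$ and $(2H, I(2H))$ on the interval $[0, 2H]$, which gives $I(s) \ge \frac{s}{2H} I(2H) = \frac{s}{2H}(H - L) = \frac{s}{2} - \frac{s}{2H} L \ge \frac{s}{2} - L$, where the last step uses $s \le 2H$ and $L \ge 0$. (Note $L \ge 0$ because $I(2H) \le H$.) In the regime $s > 2H$, we have $\min\{s/2, H\} = H$, so the claim is $I(s) \ge H - L = I(2H)$, which follows simply from monotonicity of $I$ since $s > 2H$. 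The I-MMSE relation $I'(s) = \frac12 M(s) \ge 0$ gives monotonicity, and concavity follows because $M(s)$ is non-increasing in $s$ (a standard fact about the scalar Gaussian channel), equivalently $I''(s) = \frac12 M'(s) \le 0$.

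I do not expect any serious obstacle; the statement is essentially a packaging of concavity plus the two trivial upper bounds, chosen so that the bound is tight at $s = 2H$ (where both sides of the lower bound equal $I(2H)$). The only point requiring a little care is justifying concavity of $I$ and monotonicity of $M$ for a general discrete $\pi$ with finite entropy — but these are classical properties of the Gaussian channel (monotonicity and concavity of mutual information in SNR), and can be cited from the same reference used earlier for the I-MMSE relation. The other mild subtlety is handling the endpoint $s = 2H$ itself, which is covered by either regime and gives equality $I(2H) = H - L$ by the definition of $L$. I would write the proof in two or three lines per regime, invoking concavity for the chord bound and monotonicity for the tail.

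\begin{proof}
Since $\pi$ is discrete with finite entropy $H$, we have $I(s) = H - H(\beta_0 \mid \sqrt{s}\beta_0 + N) \le H$, and the inequality $I(s) \le s/2$ was recorded in Section II. Hence $I(s) \le \min\{s/2, H\}$ for all $s \ge 0$, which is the upper bound. Moreover $L = H - I(2H) \ge 0$ since $I(2H) \le H$.

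For the lower bound, recall from the I-MMSE relation that $I'(s) = \frac12 M(s)$, so $I$ is non-decreasing; and since $M(s)$ is non-increasing in $s$, the function $I$ is concave on $[0,\infty)$, with $I(0) = 0$. If $s \le 2H$, concavity and $I(0)=0$ give
\begin{align}
I(s) \ge \frac{s}{2H}\, I(2H) = \frac{s}{2H}(H - L) = \frac{s}{2} - \frac{s}{2H}\,L \ge \frac{s}{2} - L,
\end{align}
using $s \le 2H$ and $L \ge 0$; and $\min\{s/2, H\} = s/2$ in this regime. If $s > 2H$, then by monotonicity $I(s) \ge I(2H) = H - L = \min\{s/2, H\} - L$. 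In both cases $I(s) \ge \min\{s/2, H\} - L$, completing the proof.
\end{proof}
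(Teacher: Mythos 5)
Your proof is correct, and the overall structure (two trivial upper bounds, then a split at $s=2H$ with monotonicity handling the tail $s>2H$) matches the paper's. The one place where you take a genuinely different route is the lower bound on $[0,2H]$. The paper defines $A(s):=s/2-I(s)$, notes $A'(s)=(1-M(s))/2\ge 0$ from the I-MMSE relation together with the unit-variance bound $M(s)\le 1$, and concludes $A(s)\le A(2H)=L$, i.e.\ $I(s)\ge s/2-L$ directly. You instead invoke concavity of $I$ (equivalently, that $M$ is non-increasing in $s$) and bound $I$ from below by the chord through $(0,0)$ and $(2H,I(2H))$, giving $I(s)\ge \tfrac{s}{2H}I(2H)=\tfrac{s}{2}-\tfrac{s}{2H}L\ge \tfrac{s}{2}-L$. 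Both arguments are valid; yours even yields the marginally sharper intermediate bound $I(s)\ge \tfrac{s}{2}-\tfrac{s}{2H}L$. The trade-off is that the paper's argument uses only $M\le 1$, which it has already recorded as a consequence of the unit-variance assumption, whereas yours additionally needs monotonicity of the MMSE in SNR — a classical fact about the scalar Gaussian channel that you correctly flag and would need to cite (e.g.\ from the same Guo--Shamai--Verd\'u reference), but which is not stated elsewhere in the paper. Either way the lemma stands.
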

\begin{proof}
Define $A(s) : =  s/2 - I(s)$. By the I-MMSE relation and the assumption of the unit variance, the derivative $A'(s) = (1 - M(s))/2$ is nonnegative, and thus $0 \le s \le 2H$ implies that  $A(0)  \le  A(s) \le A(2 H)$. Noting that $A(0) = 0$ and $A(2 H) = L$ yields $0 \le  s/2 - I(s) \le  L $ for all $s \in [0, 2H]$.  Meanwhile,  the mutual information also satisfies the upper bound $I(s) \le H$ for all $s$. Finally, because $I(s)$ is non-decreasing, $s \ge 2H$ implies that $I(s) \ge I(2H)  = H  - L$. Combining these inequalities gives the stated result. 
\end{proof}

\subsection{Minimizer of potential function}

We now consider upper and lower bounds on the minimizers of the potential function $\cF(s)$ defined in (\ref{pote}). The basic idea behind our approach is to use the following simple relations for the minimizers of $\cF(s)$: 
\begin{align}
\min_{s \in (0, t]} \cF(s) > \min_{s \in (0, \infty )} \cF(s)  \quad \implies \quad \underline{s}^* > t\\
\min_{s \in (0, \infty)} \cF(s) < \min_{s \in [t, \infty )} \cF(s)  \quad \implies \quad \overline{s}^* <  t .
\end{align}

\begin{lemma}\label{lem:sLB}
For any $t \in [2H, \infty)$, 
\begin{align}
\delta > \frac{ t + 2L}{ \log(1 + \snr)}  \quad \implies \quad \underline{s}^* >  t .
\end{align}
\end{lemma}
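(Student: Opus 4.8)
The plan is to use the first implication noted just before the statement, namely that if $\min_{s \in (0,t]} \cF(s) > \min_{s \in (0,\infty)} \cF(s)$ then $\underline{s}^* > t$. So I would fix $t \geq 2H$ and try to show that under the hypothesis $\delta > (t + 2L)/\log(1+\snr)$, the infimum of $\cF$ over $(0,t]$ strictly exceeds some value that is provably $\geq \cF^*$. A convenient target value is $\cF$ evaluated at a point to the right of $t$ — or more robustly, the limiting behavior $\cF(\delta\snr)$ or an explicit upper bound on $\cF^*$.

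First I would produce an \emph{upper bound on $\cF^*$}. Since $\cF^* = \min_s \cF(s) \leq \cF(s_0)$ for any convenient $s_0$, and by Lemma~\ref{lem:Ibounds} we have $I(s) \leq \min\{s/2, H\} \leq H$, I can bound $\cF(s_0) = I(s_0) + \tfrac{\delta}{2}\phi(s_0/(\delta\snr)) \leq H + \tfrac{\delta}{2}\phi(s_0/(\delta\snr))$. Choosing $s_0 = \delta\snr$ makes $\phi(1) = 0$, giving the clean bound $\cF^* \leq H$. Next I would produce a \emph{lower bound on $\cF$ over $(0,t]$}. For $s \in (0,t]$ with $t \geq 2H$, Lemma~\ref{lem:Ibounds} gives $I(s) \geq \min\{s/2, H\} - L$. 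I split into two regimes: where $s \leq 2H$ (so $\min\{s/2,H\} = s/2$) and where $2H < s \leq t$ (so $\min\{s/2,H\} = H$). In both cases $I(s) \geq$ something; combined with the non-negativity and convexity of $\phi$, the term $\tfrac{\delta}{2}\phi(s/(\delta\snr))$ is what forces $\cF(s)$ to be large when $s$ is small relative to $\delta\snr$. The key quantitative fact I expect to need is the elementary inequality $\phi(x) = x - \log x - 1 \geq -\log x$ for $x \in (0,1]$, so that $\tfrac{\delta}{2}\phi(s/(\delta\snr)) \geq -\tfrac{\delta}{2}\log(s/(\delta\snr)) = \tfrac{\delta}{2}\log(\delta\snr/s)$ whenever $s \leq \delta\snr$.

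Putting these together: for $s \in (0,t]$, $\cF(s) \geq I(s) + \tfrac{\delta}{2}\log(\delta\snr/s)$ (assuming $t \leq \delta\snr$, which should follow since the minimizer bounds from Lemma~\ref{lem:cFmin} place everything below $\delta\snr$; I would need to check the edge case $t > \delta\snr$ separately, though there $\cF$ is already increasing). In the regime $2H \leq s \leq t$ this is $\geq H - L + \tfrac{\delta}{2}\log(\delta\snr/t)$, and in the regime $s \leq 2H$ it is $\geq s/2 - L + \tfrac{\delta}{2}\log(\delta\snr/s)$, whose infimum over $s \in (0, 2H]$ I would minimize by calculus (derivative $\tfrac12 - \tfrac{\delta}{2s}$, so the minimum is at $s = \delta$ if $\delta \leq 2H$, or at $s = 2H$ otherwise) — in the worst case this yields something like $H - L + \tfrac{\delta}{2}\log(\delta\snr/(2H))$ or better. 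The upshot I want is $\inf_{s \in (0,t]} \cF(s) \geq H - L + \tfrac{\delta}{2}\log(\delta\snr/t)$, possibly after massaging constants. Then $\inf_{(0,t]}\cF > \cF^*$ will follow from $H - L + \tfrac{\delta}{2}\log(\delta\snr/t) > H$, i.e. $\tfrac{\delta}{2}\log(\delta\snr/t) > L$. I now need to check this is implied by $\delta > (t+2L)/\log(1+\snr)$: using $\log(\delta\snr/t) \geq$ ... hmm, this requires relating $\log(\delta\snr/t)$ to $\log(1+\snr)$ and to $t$. This is the step I expect to be the main obstacle — the bookkeeping that converts the hypothesis $\delta\log(1+\snr) > t + 2L$ into the inequality $\delta\log(\delta\snr/t) > 2L$. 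The natural move is to observe that $\delta\snr/t \geq \snr \cdot (\delta/t)$ and, crucially, from the hypothesis $\delta > t/\log(1+\snr) \geq t/\snr$ (since $\log(1+\snr)\leq\snr$), so $\delta\snr/t > 1$, ensuring the log is positive; then a more careful estimate, perhaps iterating or using $\log(1+\snr) \leq \log(\snr) + \snr^{-1}$-type bounds, should close the gap. I would be prepared for the constant $2L$ versus a cruder $L$-based bound to require choosing the comparison point $s_0$ more cleverly (e.g. not exactly $\delta\snr$ but slightly to the right of $t$) so that the two sides match up exactly as in the claimed implication.
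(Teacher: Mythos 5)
Your overall strategy --- bounding $\cF^*\le H$ by evaluating at $s_0=\delta\snr$ (where $\phi(1)=0$ and $I\le H$), and then showing $\inf_{s\in(0,t]}\cF(s)>H$ using the lower bound on $I$ from Lemma~\ref{lem:Ibounds} --- is exactly the paper's strategy. The gap is in how you lower-bound the penalty term $\frac{\delta}{2}\phi\left(s/(\delta\snr)\right)$. Your ``key quantitative fact'' $\phi(x)=x-\log x-1\ge-\log x$ for $x\in(0,1]$ is false: $\phi(x)-(-\log x)=x-1<0$ on $(0,1)$, so $-\log x$ is an \emph{upper} bound for $\phi$ there, not a lower bound. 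Even after correcting this (the trivial identity is $\phi(x)=-\log x+(x-1)$, which costs you an extra $-\delta/2$), your route still requires converting the hypothesis $\delta\log(1+\snr)>t+2L$ into something like $\delta\log(\delta\snr/t)>2L$ --- a step you explicitly leave open, and which does not follow from the elementary manipulations you sketch (for small $\snr$ the quantity $\log(\delta\snr/t)$ is only of order $\snr$, and the comparison becomes delicate).

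The paper closes this with a single move you did not find: by convexity, $\phi(y)\ge\phi(x)+(y-x)\phi'(x)$; taking the tangent at $x=1/(1+\snr)$, where $\phi'(x)=1-1/x=-\snr$, and $y=s/(\delta\snr)$, gives $\phi\left(s/(\delta\snr)\right)\ge\log(1+\snr)-s/\delta$ for every $s>0$. This linearization produces the $\log(1+\snr)$ of the hypothesis directly, and the residual $-s/2$ combines with $I(s)\ge\min\{s/2,H\}-L$ and $t\ge 2H$ to give $\inf_{s\in(0,t]}\cF(s)\ge H-L+\frac{\delta}{2}\log(1+\snr)-\frac{t}{2}>H$, which is precisely the assumed inequality $\delta>(t+2L)/\log(1+\snr)$. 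Replacing your logarithmic bound with this tangent-line bound repairs the argument and also disposes of your worry about the case $t>\delta\snr$, since the inequality holds for all $s$.
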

\begin{proof}
Observe that  $\phi(x)$ is convex and thus $\phi(y) \ge \phi(x) + (y-x) \phi'(x)$ for all $x,y \in (0,\infty)$. Noting that $\phi'(x) = 1 - 1/x$ and evaluating with $y = s/(\delta \snr)$ and $x = 1/(1 + \snr)$ leads to 
\begin{align}
\phi\left( \frac{s   }{\delta \snr }   \right) &  \ge   \log(1 + \snr)   -    \frac{ s}{ \delta}. 
\end{align}
Using this inequality to lower bound the potential function, we have
\begin{align}
 \min_{s \in [0,  t)} \cF(s)  
& \ge   \min_{s \in [0, t)}  \left\{ I(s) + \frac{ \delta}{2} \log(1 + \snr) - \frac{s}{2}   \right\}\\
%& \ge  \min\left\{  \min_{s \in [0, 2H)}  \left\{\frac{ \delta}{2} \log(1 + \snr)  - L    \right\},  \min_{s \in [2H, t)}  \left\{ H - L  + \frac{ \delta}{2} \log(1 + \snr) - \frac{s}{2}   \right\} \right\}\\
& \ge  H  - L  + \frac{ \delta}{2} \log(1 + \snr) - \frac{t}{2}   >  H,
\end{align}
where the second step follows from Lemma~\ref{lem:Ibounds} and the assumption $t \ge 2H$, and the last step follows from the assumption on $\delta$. Meanwhile, using the upper bound $I(s) \le H$, we have
\begin{align}
  \min_{s \in (0,  \infty)} \cF(s)   \le  \min_{s \in (0,  \infty)} \left\{ H + \frac{\delta}{2} \phi\left( \frac{s   }{\delta \snr }   \right) \right\}  =  H.
\end{align} 
Thus,  we  can conclude that the minimum is not attained in the interval $(0, t]$. 
\end{proof}

\begin{lemma}\label{lem:sUB}
For any $t \in (0, 2H]$, 
\begin{align}
\delta <  \frac{ t -  2L}{ \log(1 + \snr)}  \quad \implies \quad \overline{s}^* < t.
\end{align}
\end{lemma}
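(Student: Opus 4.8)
The plan is to mirror the proof of Lemma~\ref{lem:sLB}, swapping the roles of the upper and lower bounds on $I(s)$ so as to show that the global minimum of $\cF$ is strictly smaller than its infimum over $[t,\infty)$, which by the second displayed implication before Lemma~\ref{lem:sLB} forces $\overline{s}^* < t$. Concretely, I would first fix $t \in (0,2H]$ and bound $\min_{s\in (0,\infty)} \cF(s)$ from above by evaluating $\cF$ at a conveniently chosen point. Since $\phi$ is minimized at $x=1$, the natural choice is $s = \delta\snr$, giving $\phi(s/(\delta\snr)) = \phi(1) = 0$, so that $\cF(\delta\snr) = I(\delta\snr) \le \min\{\delta\snr/2, H\} \le H$ by Lemma~\ref{lem:Ibounds}. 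Actually, to get a bound involving $t/2$ I would instead use the lower bound $I(s)\ge \min\{s/2,H\} - L$ only where it helps and the upper bound $I(s)\le H$ elsewhere; the cleanest route is $\min_{s}\cF(s) \le \cF(\delta\snr) = I(\delta\snr) \le H$, exactly as in Lemma~\ref{lem:sLB}.

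Next I would lower bound $\min_{s\in[t,\infty)}\cF(s)$. Here the convexity inequality for $\phi$ should be applied at a point where it pays off on $[t,\infty)$: writing $\phi(y)\ge \phi(x) + (y-x)\phi'(x)$ with $\phi'(x) = 1-1/x$ and choosing $x = 1$ gives $\phi(y) \ge y - 1$, hence $\phi(s/(\delta\snr)) \ge s/(\delta\snr) - 1$. Wait — to recover a $\log(1+\snr)$ term I should instead tangent at $x = 1/(1+\snr)$ as in Lemma~\ref{lem:sLB}, but that gave a \emph{lower} bound decreasing in $s$, which is the wrong direction for bounding below on $[t,\infty)$. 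So the correct tangent point is $x = 1$, which yields the bound increasing in $s$. Then on $[t,\infty)$ I would combine $I(s)\ge \min\{s/2,H\} - L$; since $t\le 2H$ the binding case near $s=t$ is $I(s) \ge s/2 - L$, but for large $s$ one needs $I(s)\ge H-L$. The resulting estimate is $\cF(s) \ge I(s) + \tfrac{\delta}{2}\bigl(\tfrac{s}{\delta\snr}-1\bigr)$; minimizing the right side over $s\ge t$ and using the hypothesis $\delta < (t-2L)/\log(1+\snr)$ should produce a quantity strictly larger than $H$, completing the sandwich $\min_{[t,\infty)}\cF > H \ge \min_{(0,\infty)}\cF$.

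The main obstacle I anticipate is getting the right tangent point and the right piecewise case of Lemma~\ref{lem:Ibounds} to line up so that the arithmetic actually closes with the stated threshold $(t-2L)/\log(1+\snr)$ rather than some weaker bound. In particular, the $\log(1+\snr)$ appearing in the hypothesis strongly suggests that, as in Lemma~\ref{lem:sLB}, one should tangent $\phi$ at $x = 1/(1+\snr)$ — but there the inequality was used to bound $\cF$ \emph{from below on a bounded interval} $(0,t)$, whereas here I need a lower bound on an \emph{unbounded} interval $[t,\infty)$, so I will likely need to split $[t,\infty)$ into $[t,2H]$ (use $I(s)\ge s/2-L$ and the $x=1/(1+\snr)$ tangent, noting $\phi$ is decreasing on $(0,1)$ so its minimum over the relevant $s$-range is controlled) and $[2H,\infty)$ (use $\cF$ strictly increasing there, already established in Lemma~\ref{lem:cFmin}, so the infimum is attained at $s=2H$ if $2H$ lies in the admissible region, reducing to the endpoint bound). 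Handling this split carefully, while keeping track of the factor $2L$, is where the real work lies; once the bookkeeping is done the conclusion $\overline{s}^* < t$ follows immediately from the minimizer relation quoted just before Lemma~\ref{lem:sLB}.
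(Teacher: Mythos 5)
Your proposal does not close, and the failure point is the choice of the ``middle'' quantity in the sandwich. You bound the global minimum by $H$ (via $\cF(\delta\snr)=I(\delta\snr)\le H$) and then try to show $\min_{[t,\infty)}\cF(s)>H$. But the only lower bound available on $[t,\infty)$ from Lemma~\ref{lem:Ibounds} is of the form $\tfrac{t}{2}-L$ (plus a $\phi$-contribution that your tangent at $x=1$ makes \emph{negative} for $s<\delta\snr$, costing you an extra $-\delta/2$), and since $t\le 2H$ we always have $\tfrac{t}{2}-L\le H-L<H$. So the inequality $\min_{[t,\infty)}\cF>H$ is not implied by the hypothesis and in general is false; the sandwich with $H$ in the middle cannot close no matter how carefully you split $[t,2H]$ from $[2H,\infty)$.

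The quantity that makes the argument work is $\tfrac{\delta}{2}\log(1+\snr)$, and the hypothesis $\delta<(t-2L)/\log(1+\snr)$ is literally the statement $\tfrac{\delta}{2}\log(1+\snr)<\tfrac{t}{2}-L$. The paper's proof realizes both sides of this inequality: for the upper bound it uses $I(s)\le s/2$ (not $I(s)\le H$) so that $\cF(s)\le \tfrac{s}{2}+\tfrac{\delta}{2}\phi\bigl(\tfrac{s}{\delta\snr}\bigr)$, whose exact minimum over $s\in(0,\infty)$, attained at $s=\delta\snr/(1+\snr)$, equals $\tfrac{\delta}{2}\log(1+\snr)$; your evaluation at $s=\delta\snr$ gives only $I(\delta\snr)\le\min\{\delta\snr/2,H\}$, both branches of which exceed $\tfrac{\delta}{2}\log(1+\snr)$ and are too weak. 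For the lower bound on $[t,\infty)$ no tangent line for $\phi$ is needed at all: simply $\phi\ge0$ and monotonicity of $I$ give $\cF(s)\ge I(s)\ge I(t)\ge \tfrac{t}{2}-L$, using $t\le 2H$ in Lemma~\ref{lem:Ibounds}. Chaining these with the hypothesis yields $\min_{(0,\infty)}\cF\le\tfrac{\delta}{2}\log(1+\snr)<\tfrac{t}{2}-L\le\min_{[t,\infty)}\cF$, and the conclusion $\overline{s}^*<t$ follows. Your instinct that the roles of the two bounds in Lemma~\ref{lem:Ibounds} get swapped relative to Lemma~\ref{lem:sLB} is right in spirit, but the swap happens between $I(s)\le s/2$ and $I(s)\ge s/2-L$, not between $H$ and $H-L$.
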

\begin{proof}
Noting that $\phi(x) \ge 0$, we have
\begin{align*}
 \min_{s \in [t,  \infty)} \cF(s)  
& \ge   \min_{s \in [t,  \infty)}   I(s) \ge I(t) \ge \frac{t}{2} - L >   \frac{\delta}{2} \log(1 + \snr) ,
\end{align*}
where the third step follows from Lemma~\ref{lem:Ibounds} for $s=t$ and the assumption $t  \leq  2H$, and the last step follows from the assumption on $\delta$. Meanwhile, using the upper bound $I(s) \le s/2$, we have
\begin{align}
  \min_{s \in (0,  \infty)} \cF(s)   \le  \min_{s \in (0,  \infty]} \left\{ \frac{s}{2} + \frac{\delta}{2} \phi\left( \frac{s   }{\delta \snr }   \right) \right\}  =  \frac{\delta}{2} \log(1+ \snr) .
\end{align}
Thus, we can conclude that the minimum is not attained in the interval $[t, \infty)$. 
\end{proof}

\subsection{Smallest stationary point of the potential function}

\begin{lemma}\label{lem:sAMPbounds}
For any $t \in (0,\infty)$, 
\begin{align}
\delta < \sup_{s \in (0, t]}  s\left( M( s)  + \frac{1}{\snr} \right)  \quad \implies \quad  s^\AMP < t\\
\delta > \sup_{s \in (0, t]}  s\left( M( s)  + \frac{1}{\snr} \right)  \quad \implies \quad  s^\AMP > t.
\end{align}
\end{lemma}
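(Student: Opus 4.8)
The plan is to work directly with the sign of $\cF'(s)$ and exploit the characterization $\cF'(s) \propto M(s) + 1/\snr - \delta/s$ established in the proof of Lemma~\ref{lem:cFmin}. Define $g(s) := s\left(M(s) + 1/\snr\right)$, so that $\cF'(s) > 0 \iff g(s) > \delta$ and $\cF'(s) < 0 \iff g(s) < \delta$ (for $s > 0$). The smallest stationary point $s^\AMP = \inf\{s : \cF'(s) = 0\}$ is the first zero-crossing of $\cF'$; since $\cF$ is strictly decreasing on $(0, \delta\snr/(1+\snr)]$ by Lemma~\ref{lem:cFmin}, we have $\cF'(s) < 0$ for $s$ near $0$, i.e.\ $g(s) < \delta$ for small $s$. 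So $s^\AMP$ is precisely the first $s$ at which $g$ reaches the level $\delta$ from below.

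For the first implication, suppose $\delta < \sup_{s \in (0,t]} g(s)$. Then there exists $s_0 \in (0, t]$ with $g(s_0) > \delta$, hence $\cF'(s_0) > 0$. Since $\cF'$ is continuous and negative near $0$, the intermediate value theorem gives a zero of $\cF'$ in $(0, s_0) \subseteq (0, t)$, so $s^\AMP < t$. For the second implication, suppose $\delta > \sup_{s \in (0,t]} g(s)$. Then $g(s) < \delta$ for all $s \in (0, t]$, so $\cF'(s) < 0$ throughout $(0, t]$; in particular $\cF'$ has no zero in $(0, t]$, which forces $s^\AMP > t$. (One should note that $s^\AMP$ is well-defined and finite — this is asserted just before Proposition~\ref{propAMP}, or alternatively follows from $\cF'(s) > 0$ for $s \ge \delta\snr$ via Lemma~\ref{lem:cFmin} — so the infimum is actually attained and is a genuine stationary point, and the strict inequalities $s^\AMP < t$, $s^\AMP > t$ make sense.)

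The main subtlety, and the one place requiring a little care rather than being purely routine, is the boundary behavior and the continuity argument: I need $\cF'$ continuous on $(0,\infty)$, which holds because $M(s)$ is continuous (indeed smooth) in $s > 0$ by standard properties of the Gaussian-channel MMSE, and I need to be sure that "$\cF'(s) < 0$ for small $s$" is genuinely available — this is exactly the content of Lemma~\ref{lem:cFmin}'s proof, which shows $\cF'(s) < 0$ on $(0, \delta\snr/(1+\snr)]$ using $M(s) < 1$. A second minor point is that the supremum in the hypothesis is over the half-open interval $(0, t]$: in the first implication the strict inequality $\delta < \sup g$ guarantees the supremum is not merely approached but exceeded at some actual point $s_0 \in (0,t]$, so no limiting argument at the endpoint is needed; in the second implication $\delta > \sup g$ immediately bounds $g$ strictly below $\delta$ on all of $(0,t]$. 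So the proof is essentially a clean application of the intermediate value theorem once the sign formula for $\cF'$ and the near-zero behavior are in hand; I do not anticipate a serious obstacle, only the need to state the continuity and endpoint considerations precisely.
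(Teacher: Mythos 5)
Your proof is correct and follows essentially the same route as the paper: the paper also introduces the function $\delta_\mathrm{FP}(s) = s\left(M(s)+1/\snr\right)$ (your $g$), observes it is continuous with $\delta_\mathrm{FP}(0)=0$, and applies the intermediate value theorem to the equivalence ``$s$ is stationary iff $\delta_\mathrm{FP}(s)=\delta$,'' which is just your sign analysis of $\cF'$ rephrased. Your extra remarks on attainment of $s^\AMP$ and the endpoint of $(0,t]$ are consistent with (and if anything slightly more careful than) the paper's treatment.
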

\begin{proof}
By differentiation and the I-MMSE relation, one finds that every stationary point of $\cF(s) $ satisfies the fixed-point equation $  M(s)   -\delta / s + 1/\snr = 0$. Rearranging and solving for $\delta$, we see that $s$ is a stationary point if and only if $ \delta_\mathrm{FP}( s) = \delta$ where 
\begin{align}
\delta_\mathrm{FP}(s)  & := s\left( M( s)  + \frac{1}{\snr} \right) .
\end{align}
The function $\delta_\mathrm{FP}(s)$ is continuous with $\delta_\mathrm{FP}(0) = 0$. Therefore, if $\delta < \sup_{s \in (0, t]} \delta_\mathrm{FP}(s)$ then the equation $\delta_\mathrm{FP}(s) =\delta$ has at least one solution on $[0, t)$, and this implies that $s^\AMP < t$. Conversely, if  $\delta > \sup_{s \in (0, t]} \delta_\mathrm{FP}(s)$ then there is no solution on $[0, t)$ and this  implies that $s^\AMP >  t$. 
\end{proof}

 \section{Proofs of Main Results} 
 
 \subsection{Proof of Theorem~\ref{thm:MMSE_Thm}}

First we show that Assumption~\ref{dfn:stepass} implies that $L_\eps / H_\eps \to 0$, where we recall that $L_\eps = H_\eps - I_\eps(2 H_\eps)$. To see why, observe that
\begin{align}
L_\eps   & = \frac{1}{2} \int_0^{2H_\eps} ( 1 -  M(s))  \, \dd s  =  H_\eps  \int_0^{1} ( 1 -  M(2 H_\eps t))  \, \dd t. 
\end{align} 
For any $\eta \in (0, 1)$, the integral on the right can be upper bounded as 
\begin{align}
 \int_0^{1} ( 1 -  M(2 H_\eps t))  \, \dd t \le  \int_0^{1-\eta} ( 1 -  M(2 H_\eps t))  \, \dd t  + \eta . 
\end{align}
By Assumption~\ref{dfn:stepass}, the first term on the right-hand side converges to zero in the small-$\eps$ limit. Noting that $\eta$ can be chosen arbitrarily small establishes that $L_\eps / H_\eps \to 0$. 

We are now ready to consider the case $r \in (0, 1)$.  Fix $\eta$ such that $ r < 1- \eta < 1$ and let $t_\eps = (1  - \eta/2) 2H_\eps$. For all $\eps$ sufficiently small, we have $\eta H_\eps > 2 L_\eps $  and thus  $t_\eps  - 2 L_\eps \ge (1- \eta) 2 H_\eps >  r 2H_\eps$.  Under the assumed scaling in \eqref{eq:scaling_r}, this means that $\delta_\eps \le (t_\eps  - 2 L_\eps ) / \log(1 + \snr_\eps)$ for all $\eps$ small enough.  By Lemma~\ref{lem:sUB}, this implies that $\overline{s}^* < (1- \eta/2) 2 H_\eps$ and by Assumption~\ref{dfn:stepass}, it follows that  $M_\eps \left( \overline{s}^*_\eps \right )  \to 1$. The case $r \in (1,\infty)$ follows from a similar argument and is omitted.

\subsection{Proof of Theorem~\ref{thm:AMP_Thm}}
Consider the case $r \in (0,1)$. Fix $\eta$ such that $r < (1- \eta)^2 < 1$ and let $t_\eps = (1- \eta) 2H_\eps$. 
By \eqref{eq:Meps_cond}, for all $\eps$ sufficiently small, we have $M_\eps(s) \ge 1- \eta$ for all $s \in [0, t_\eps]$,  and thus
\begin{align}
\sup_{s \in [0, t_\eps]} s\left( M_\eps( s)  + \frac{1}{\snr_\eps} \right) & \ge \sup_{s \in [0, t_\eps]} s\left( 1  - \eta + \frac{1}{\snr_\eps} \right)\\
&  =   (1 -\eta) 2 H_\eps \left( 1  - \eta + \frac{1}{\snr_\eps} \right) \\
& >  (1 -\eta)^2 2 H_\eps \left( 1 + \frac{1}{\snr_\eps} \right).%\\
%& >  \rho\,  2 H_\eps \left( 1 + \frac{1}{\snr_\eps} \right)
\end{align}
The scaling \eqref{eq:scaling_rho} combined with the assumption $r < (1- \eta)^2$ means that, for all $\eps$ small enough, 
\begin{align}
\delta_\eps < \sup_{s \in [0, t_\eps]} s\left( M_\eps( s)  + \frac{1}{\snr_\eps} \right). 
\end{align}
By Lemma~\ref{lem:sAMPbounds}, this implies that $s^\AMP_\eps <  t_\eps$ and by \eqref{eq:Meps_cond}, this means that $M_\eps (s^\AMP_\eps )  \to 1$. The case $r \in (1,\infty)$ follows from a similar argument and is omitted.

\end{document}